\tikzset{
   commutative diagrams/.cd,
   arrow style=tikz,
   diagrams={>=latex}}
\newtheorem{theorem}{Theorem}[section]
\newtheorem{lemma}[theorem]{Lemma}
\newtheorem{sublemma}[theorem]{Claim}
\newtheorem{corollary}[theorem]{Corollary}
\newtheorem{proposition}[theorem]{Proposition}
\theoremstyle{definition}
\newtheorem{remark}[theorem]{Remark}
\date{}
\title{On genera of coverings of Torus Bundles}
\author{V\'\i ctor N\'u\~nez}
\email{victor@cimat.mx}
\address{CIMAT, A.P. 402, Guanajuato 36000, Gto., M\'EXICO}
\author[Enrique Ram\'\i rez]{Enrique Ram\'\i rez-Losada}
\email{kikis@cimat.mx}
\address{CIMAT, A.P. 402, Guanajuato 36000, Gto., M\'EXICO}
\author[Jair Remigio]{Jair Remigio-Ju\'arez}
\email{jair.remigio@ujat.mx}
\address{DACB, Universidad Ju\'arez Aut\'onoma de Tabasco, Km.~1 Carr. Cunduac\'an-Jalpa de M\'endez, Cunduac\'an 86690, Tab., M\'EXICO}
\subjclass{Primary 57M12}
\keywords{Heegaard genus, covering space, torus bundle.}
\begin{document}

\maketitle

\begin{abstract}

After showing that a covering space of surface bundles over~$S^1$
factors as a `covering of fibers' followed by a `power covering', we
prove that, for torus bundles, power coverings do not lower Heegaard
genus, and that fiber coverings lower the
genus only in special cases.

\end{abstract}
\section{Introduction.}
Any closed connected 3-manifold $M$ is the union of two handlebodies with
pairwise disjoint interiors. The minimal genus of the handlebodies
among all such decompositions is called the \emph{Heegaard genus} of
$M$, or simply the \emph{genus} of $M$ and is denoted by $g(M)$.
The \emph{rank} of a group is the cardinality of a
minimal set of generators for the group.
 The
rank of the fundamental group of $M$ is called \emph{the rank} of $M$, and
gives a lower bound~$rank(\pi_1(M))\leq g(M)$. For a covering space of
3-manifolds~$\varphi:\tilde M\rightarrow M$, we say that $\varphi$
\emph{lowers the genus} if $g(\tilde M)<g(M)$.

There are two famous questions. First, for a 3-manifold $M$, is the
genus of $M$ equal to the rank of $M$? And, secondly, if
the fundamental group of $M$ contains a finite index subgroup of a
given rank, can the rank of the subgroup be smaller than the rank of $M$? In terms of
covering spaces, we can pose the second question as: Is there a
finite-sheeted covering space of $M$ that lowers the genus? and, 
how large is the lowering? 

P. Shalen states two conjectures (\cite{chalen}):
\begin{enumerate}
\item \label{conj1} For closed connected orientable hyperbolic 3-manifolds rank
  equals genus.

\item \label{conj2} For closed connected orientable hyperbolic 3-manifolds a
  finite-sheeted covering space lowers the genus at most by one.

\end{enumerate}

With respect to Conjecture~(\ref{conj1}), we know nowadays that the genus
can be arbitrarily larger than the rank (for non-hyperbolic manifolds see
\cite{chultens}; for
hyperbolic manifolds see \cite{toreador}).

With respect to Conjecture~(\ref{conj2}), one needs to establish for a
given manifold,
first, if there are indeed covering spaces that lower the genus, and,
then, to determine how large is the lowering. For hyperbolic
manifolds very few examples of genus-lowering covering spaces are known
(see \cite{chalen}, Section~4.5). For non-hyperbolic manifolds we can
distinguish 
cases:

\begin{enumerate}
\item If $\pi_1(M)$ is finite and non-trivial, then the universal
  cover of $M$ lowers the genus.

\item If $M$ is a torus bundle over $S^1$, we show in this work that only in
  special cases $M$ admits genus-lowering covering spaces.

\item For Seifert manifolds with orbit surface of genus $g$,
  
\begin{enumerate}
\item If $g\neq0$, there are only few examples of genus-lowering
  covering spaces (see Section~\ref{seifert} and \cite{jair}).
\item If $g=0$, there are examples, but it is an open problem to
  determine all possible genus-lowering covering spaces.
\end{enumerate}

\item If $M$ is a graph-manifold, it is an open problem to
  determine if there are genus-lowering covering spaces of $M$.

\end{enumerate}

The paper is organized as follows. In Section~\ref{sec1}, after some algebraic
remarks, we determine the structure of the covering spaces of surface
bundles, namely, we show that any covering space of surface bundles is
a product of a covering of fibers followed by a power covering
(Corollary~\ref{coro23}). This reduces the problem of finding
coverings that lower the genus to finding either power coverings or
fiber coverings that lower the genus. We also enlist results on torus bundles
from~\cite{saku1} that 
are used throughout the paper. In Section~\ref{sec3} we show that power
coverings of torus bundles do not lower the genus. In
Section~\ref{sec4} we determine the structure of the coverings of
fibers of torus bundles (Theorem~\ref{lemma42}) and characterize the  examples of coverings
of fibers of torus bundles that lower the genus
(Theorem~\ref{thm46}). Of interest is Corollary~\ref{coro48} where we explicitly
describe
the subgroups of the fundamental group of the torus which correspond to finite covering
spaces. We include in
Section~\ref{seifert} the 
examples of covering spaces of Seifert manifolds that lower the genus
mentioned above. These coverings of Seifert
manifolds are cyclic, and give examples for the remark in
Section~4.6 of~\cite{chalen}.

\section{Preliminaries.}
\label{sec1}

If $X$ is a set, we write $S(X)$ for the symmetric group on the
symbols in X. If $\#(X)=n$, we write $S_n=S(X)$. If $G\leq S(X)$, and
$i\in X$, we write $St_G(i)=\{\sigma\in G:\sigma(i)=i\}$; also write $St(i)=St_{S(X)}(i)$.

\begin{lemma}\label{normal}
	Assume that $G\leq S_n$ is a transitive group and $K\vartriangleleft
        G.$ Let $A_1, ..., A_m$ be the orbits of~$K$. Then for each
        $i\in\{1,\dots,m\}$ and each
        $\sigma\in G$, $\sigma\cdot A_i=A_s$ for some $s$. In
        particular $\#A_1=\cdots = \#A_m.$
\end{lemma}
\begin{proof}
For $\sigma \in G$ and $i\in\{1,\dots,m\}$,

$$
\begin{array}{ccll}
K\cdot\left(\sigma\cdot A_i\right) &=& (K\cdot\sigma)\cdot A_i\\
&=&\left(\sigma\cdot K\right)\cdot A_i, &\textrm{ for, $K$ is normal,}\\
&=&\sigma\cdot \left(K\cdot A_i\right)\\ 
&=&\sigma\cdot A_i, &\textrm{ for, $A_i$ is an orbit of $K.$}\\
\end{array}
$$

Thus $\sigma\cdot A_i$ is a union of orbits of $K$.
Since $A_i$ is an orbit, $A_i\neq\emptyset$. Pick some $a\in A_i$; then
$\sigma(a)\in A_s$ for some $s$, and $A_s\subset\sigma\cdot A_i$.

Assume that there is a $t$ such that $A_t\subset \sigma\cdot A_i$. Choose $b\in A_i$ such that $\sigma(b)\in A_t$.
Since there is a $\tau_1\in K$ such that $\tau_1(a)=b$, then
$\sigma(b)=\sigma(\tau_1(a))=\tau_2(\sigma(a))$ for some other
$\tau_2\in K$ for, $K$ is normal. Since $A_s$ is an orbit of $K$, then
$\tau_2(\sigma(a))\in A_s$. Then $A_s\cap A_t\neq\emptyset$, and
therefore $A_s=A_t$. We conclude that $\sigma\cdot A_i=A_s$.

Since $G$ is transitive, for each $s$ there is a $\sigma\in G$ such
that $\sigma\cdot A_1=A_s$. It follows that~$\#A_1=\cdots =\#A_m.$
\end{proof}

\begin{lemma} 
  \label{lemma12}
  Assume that $G\leq S_n$ is a transitive group and
  $K\vartriangleleft G.$ 
  Then there exist homomorphisms $q:G\to S_m$ and
  $\gamma:q^{-1}(St(1))\to S_{{n}/{m}}$ such that $St_G(1)\subset
  q^{-1}(St(1))$, and $q(K)=1$, and $\gamma|K$ is transitive.
\end{lemma}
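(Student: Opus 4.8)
The plan is to build both homomorphisms directly from the block structure produced by Lemma~\ref{normal}. That lemma supplies the orbits $A_1,\dots,A_m$ of $K$, all of common cardinality $n/m$, on which $G$ acts by permutations: for every $\sigma\in G$ and every $i$ we have $\sigma\cdot A_i=A_s$ for a unique $s$. I would begin by relabelling the symbols so that $1\in A_1$, and then define $q\colon G\to S_m$ to be the action of $G$ on the set of blocks $\{A_1,\dots,A_m\}$, identifying $A_i$ with $i$. Because the block containing $\sigma\tau(a)$ is obtained by first moving blocks by $\tau$ and then by $\sigma$, the assignment $\sigma\mapsto(\text{block permutation induced by }\sigma)$ respects composition, so $q$ is a homomorphism.

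With $q$ in hand, the two stated properties are immediate. Each $\tau\in K$ fixes every orbit setwise, i.e. $\tau\cdot A_i=A_i$, so $q(\tau)=1$ and hence $q(K)=1$. If $\sigma\in St_G(1)$, then $\sigma(1)=1\in A_1$, so $\sigma\cdot A_1$ is the block containing $1$, namely $A_1$; thus $q(\sigma)$ fixes the symbol $1$, giving $St_G(1)\subseteq q^{-1}(St(1))$. In particular, since $q(K)=1\in St(1)$, we also obtain $K\subseteq q^{-1}(St(1))$, which is what makes the restriction of $\gamma$ to $K$ meaningful.

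For $\gamma$ I would use restriction to the block $A_1$. By construction $q^{-1}(St(1))$ consists exactly of those $\sigma\in G$ with $\sigma\cdot A_1=A_1$, i.e. the elements fixing $A_1$ setwise; each such $\sigma$ therefore permutes the $n/m$ symbols of $A_1$ among themselves. Setting $\gamma(\sigma)=\sigma|_{A_1}\in S(A_1)\cong S_{n/m}$ yields a homomorphism, since restriction of permutations to an invariant subset is compatible with composition. Finally, $\gamma|K$ is transitive precisely because $A_1$ was chosen to be an orbit of $K$: any symbol of $A_1$ is carried to any other by some element of $K$, and that element restricts under $\gamma$ to the corresponding permutation of $A_1$.

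There is no serious obstacle here, as the content is entirely carried by Lemma~\ref{normal}. The only points requiring a little care are the verification that $q$ is genuinely a homomorphism rather than merely a map, and the clean identification of $q^{-1}(St(1))$ with the setwise stabiliser of $A_1$; together these are exactly what align the stabiliser condition on $q$ with the transitivity condition on $\gamma$.
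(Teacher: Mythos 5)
Your proof is correct and follows essentially the same route as the paper's: define $q$ via the induced action of $G$ on the blocks $\{A_1,\dots,A_m\}$ (the orbits of $K$, with $1\in A_1$) and define $\gamma$ by restriction to $A_1$. The only difference is that you spell out the verifications (homomorphism property, $q(K)=1$, the stabiliser inclusion, and transitivity of $\gamma|K$) that the paper leaves implicit.
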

\begin{proof}
  By Lemma~\ref{normal}, $G$ is imprimitive with the orbits of~$K$, $A_1,
  ..., A_m$, a set of  imprimitivity blocks. We
  assume that $1\in A_1$. Then we 
  have these 
  homomorphisms: $q:G\rightarrow S(\{A_1,\dots,A_m\})=S_m$ which is induced by
  the quotient $p:A_1\cup\cdots\cup A_m\rightarrow \{A_1,\dots,A_m\}$
    such that $p(a)=A_i\Leftrightarrow a\in A_i$, and
    $\gamma:q^{-1}(St(A_1))\to S(A_1)=S_{{n}/{m}}$ which is given by
    restriction $\gamma(\sigma)=\sigma|A_1$.
\end{proof}

\subsection{Coverings of surface bundles.}
\begin{corollary}
\label{coro23}
  Let  $F\hookrightarrow M\rightarrow S^1$ be a surface bundle over $S^1$, and
  let $\varphi:\widetilde{M}\to M$ be an~$n$-fold covering space.
  Then there
  is a commutative diagram of covering spaces
  of surface bundles over $S^1$
$$
\begin{tikzcd}
\widetilde M  \arrow[swap]{dd}{\varphi} \arrow{rd}{\varphi_\gamma} \\
  & N \arrow{ld}{\varphi_q} \\
M
\end{tikzcd}
$$
  such that $\varphi_q$ and $\varphi_\gamma$ are $m$-fold and
  $n/m$-fold covering spaces, respectively, and
  $\varphi_q^{-1}(F)=F_1\sqcup\cdots\sqcup F_m$ with
  $\varphi_q|:F_i\rightarrow F$ a homeomorphism for $i=1,\dots,m$,  and
  $\varphi_\gamma^{-1}(\tilde F)$ is connected for $\tilde F$ any
  fiber of $N$.
  
\end{corollary}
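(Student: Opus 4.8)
The plan is to turn the covering $\varphi$ into permutation data and then read the factorization straight off Lemma~\ref{lemma12}. First I would fix a base fiber $F\subset M$ together with basepoints, so that the bundle structure records the short exact sequence $1\to\pi_1(F)\to\pi_1(M)\xrightarrow{\ \pi\ }\mathbb{Z}\to1$; in particular $\pi_1(F)\vartriangleleft\pi_1(M)$. We may assume $\widetilde M$ connected, so the $n$-fold covering $\varphi$ is classified by a transitive monodromy $\rho\colon\pi_1(M)\to S_n$ for which $\widetilde M$ corresponds to $\rho^{-1}(St_G(1))$, where $G=\rho(\pi_1(M))$. Setting $K=\rho(\pi_1(F))$, normality of $\pi_1(F)$ gives $K\vartriangleleft G$, so the pair $(G,K)$ is exactly an instance of the hypotheses of Lemma~\ref{lemma12}.

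Feeding $(G,K)$ into Lemma~\ref{lemma12} produces $q\colon G\to S_m$ and $\gamma\colon q^{-1}(St(1))\to S_{n/m}$, with $m$ the number of $K$-orbits $A_1,\dots,A_m$ and $n/m=\#A_1$. The composite $q\circ\rho\colon\pi_1(M)\to S_m$ is transitive (the blocks are permuted transitively by $G$, by Lemma~\ref{normal}), and so defines an $m$-fold covering $\varphi_q\colon N\to M$ with associated subgroup $H_N=(q\circ\rho)^{-1}(St(A_1))$ of index $m$. Because $q(K)=1$ we have $\pi_1(F)\subseteq H_N$; combined with $[\pi_1(M):H_N]=m$ and the exact sequence, this forces $\pi(H_N)=m\mathbb{Z}$. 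Hence $\varphi_q$ is the pullback of $M\to S^1$ along the connected $m$-fold cover of $S^1$, i.e. $N$ is the mapping torus of the $m$-th power of the monodromy of $M$, a surface bundle over $S^1$. Moreover, since $\pi_1(F)$ is both normal \emph{and} contained in $H_N$, it acts trivially on the coset fiber $\pi_1(M)/H_N$, so the restricted covering $\varphi_q^{-1}(F)\to F$ is trivial: $\varphi_q^{-1}(F)=F_1\sqcup\cdots\sqcup F_m$ with each $\varphi_q|F_i$ a homeomorphism.

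To obtain $\varphi_\gamma$ I would invoke the inclusion $St_G(1)\subseteq q^{-1}(St(1))$ from Lemma~\ref{lemma12}: it yields $\rho^{-1}(St_G(1))\subseteq H_N$, so the lifting criterion factors $\varphi$ as $\varphi=\varphi_q\circ\varphi_\gamma$ for a unique covering $\varphi_\gamma\colon\widetilde M\to N$, necessarily of degree $n/m$. For the connectivity claim I would identify a fiber $\tilde F$ of $N$ with $F_1$, so that $\pi_1(\tilde F)$ corresponds to the normal subgroup $\pi_1(F)\subseteq\pi_1(N)=H_N$, and identify the $(n/m)$-element fiber of $\varphi_\gamma$ over this basepoint with the block $A_1$. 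The monodromy of $\varphi_\gamma$ restricted to $\pi_1(\tilde F)$ is then $\gamma\circ\rho|_{\pi_1(F)}$, whose image is $\gamma(K)=\gamma|K$ acting on $A_1$; since $\gamma|K$ is transitive by Lemma~\ref{lemma12}, $\pi_1(\tilde F)$ acts transitively on the fiber, which is equivalent to $\varphi_\gamma^{-1}(\tilde F)$ being connected. Commutativity of the triangle is built into the factorization, and the bundle structure of $N$ established above makes $\varphi_q$ and hence $\varphi_\gamma$ maps of bundles over $S^1$.

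The step I expect to be most delicate is the faithful translation between the purely combinatorial conclusions of Lemma~\ref{lemma12} and the geometry of $N$: namely, verifying that the block covering is genuinely a \emph{power} covering (the computation $\pi(H_N)=m\mathbb{Z}$, which rules out a larger cover of the base) and that the fiber splits into $m$ \emph{homeomorphic} copies rather than an honest $m$-fold cover of $F$. Both hinge on using normality of $\pi_1(F)$ twice over — once to apply Lemma~\ref{lemma12}, and once more to force the trivial action on $\pi_1(M)/H_N$.
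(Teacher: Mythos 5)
Your proof is correct and takes essentially the same route as the paper: use the semidirect product structure to get $\pi_1(F)\vartriangleleft\pi_1(M)$, push this normality through the monodromy representation $\rho$ so that $K=\rho(\pi_1(F))\vartriangleleft G$, and apply Lemma~\ref{lemma12}. The paper's own proof stops at ``Lemma~\ref{lemma12} applies,'' deferring the geometric translation (that $N$ is the mapping torus of $h^m$, the splitting $\varphi_q^{-1}(F)=F_1\sqcup\cdots\sqcup F_m$, and the connectivity of $\varphi_\gamma^{-1}(\tilde F)$ via transitivity of $\gamma|K$) to the remark that follows; your write-up supplies exactly those verifications, correctly.
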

\begin{proof}
  Recall that, if $F\hookrightarrow M\rightarrow S^1$ is a surface bundle, then
  $\pi_1(M)$ is isomorphic to a semi-direct product
  $\pi_1(F)\rtimes\mathbb{Z}$. In particular 
  $\pi_1(F)\lhd\pi_1(M)$. Then $\omega(\pi_1(F))\lhd Image(\omega)$
  where $\omega:\pi_1(M)\rightarrow S_n$ is the representation
  associated to $\varphi$. Lemma~\ref{lemma12} applies.

\end{proof}

\begin{remark}
Note that the coverings $\varphi_\gamma$ or $\varphi_q$ in
Corollary~\ref{coro23} might be homeomorphisms.
\end{remark}

 Let $F\hookrightarrow  M\rightarrow S^1$ be a surface
bundle. Then there is a homeomorphism $h:F\rightarrow F$, the
\emph{monodromy} of $M$, such that 
$$M=\frac{F\times I}{(x,0)\sim(h(x),1)}.$$
The \emph{infinite cyclic covering} of $M$, $u:F\times
\mathbb{R}\rightarrow M$, is the covering corresponding to the
subgroup~$\pi_1(F)\leq\pi_1(M)$, and has covering translations generated 
by~$t:F\times\mathbb{R}\rightarrow F\times\mathbb{R}$, such that 
$t(x,\lambda)=(h(x),\lambda+1)$.

\begin{remark} 
Let $F\hookrightarrow  M\rightarrow S^1$ be the surface bundle of
Corollary~\ref{coro23}. 
\begin{enumerate}
\item The covering $\varphi_q:N\rightarrow M$ of the corollary is constructed by
  taking $m$ copies  $F\times I\times\{1\},\dots,F\times
  I\times\{m\}$, and then glueing $((x,0),i)\sim((h(x),1),i+1)$ where the
  indices are taken mod~$m$; that is,
  $$N=
  \frac{F\times I\times\{1\}\sqcup\cdots\sqcup F\times
    I\times\{m\}}{((x,0),i)\sim((h(x),1),i+1)}.$$
  Note that $N$ is an $F$-bundle with monodromy $h^m$.

\item \label{psi} The covering $\varphi_\gamma:\widetilde M\rightarrow
  N$ of the corollary is
  constructed by 
  taking the covering space $\psi:\widetilde F\rightarrow F$ associated
  to $\omega|\pi_1(F):\pi_1(F)\rightarrow S_{{n}/{m}}$, and some
  monodromy $\widetilde g:\widetilde F\rightarrow \widetilde F$ such
  that the diagram
$$
\begin{tikzpicture}[baseline= (a).base]
  \node[scale=1.15] (a) at (0,0){
    \begin{tikzcd}
      \tilde F \arrow{r}{\tilde g} \arrow{d}[swap]{\psi} & \tilde F \arrow{d}{\psi}\\
      F\arrow{r}{g} & F\\
    \end{tikzcd}
  };
\end{tikzpicture}
$$
 commutes, where $g$ is the monodromy of $N$. And
  $$\widetilde M=\frac{\widetilde F\times I}{(x,0)\sim(\widetilde
    g(x),1)}.$$
  The covering projection is obtained from $\psi\times 1$.

\item The covering $\varphi_q:N\rightarrow M$ in the corollary is
  characterized by a commutative diagram of covering spaces
$$
\begin{tikzcd}
F\times\mathbb{R}  \arrow{dd}[swap]{u} \arrow{rd}{v} \\
  & N \arrow{ld}{\varphi_q} \\
M
\end{tikzcd}
$$
  where  $u$ and $v$ are infinite cyclic
  coverings. We call this type of covering space a \emph{power covering}.

\item The covering $\varphi_\gamma:\tilde M\rightarrow N$ in the corollary is
  characterized by a commutative diagram of covering spaces
$$
\begin{tikzcd}
\tilde F \times\mathbb{R}\arrow{r}{\psi\times1} \arrow{d}[swap]{\tilde
u} & F\times\mathbb{R} \arrow{d}{u}\\
\tilde M\arrow{r}{\varphi_\gamma} & M\\
\end{tikzcd}
$$
    where $\psi$ is as in (\ref{psi}), and $u$ and $\tilde u$ are infinite
    cyclic coverings. We call this type of covering space
    a \emph{covering of fibers}. 

Notice that, since $\psi\times1$ sends each fundamental region of
$\tilde u$ onto a fundamental region of $u$, the covering
$\psi\times1$ commutes with the covering translations of $\tilde u$ and $u$.
  \end{enumerate}
\end{remark}

\subsection{Torus bundles}
\label{sec22}
Let $M$ be a torus bundle over $S^1$. Then $$M\cong \frac{T^2\times
  I}{(x,0)\sim(A(x),1)},$$ 
for some homeomorphism $A:T^2\to T^2$. We write
$M=M_A$ for the torus bundle with monodromy $A$.  
Throughout this paper we fix a basis~$\pi_1(T^2)\cong\langle
x,y:[x,y]\rangle$. With respect to this basis, the homeomorphism $A$ can be
identified with an integral invertible matrix, namely, its induced isomorphism
$A_\#:\pi_1(T)\rightarrow\pi_1(T)$. 
We
consider here only orientable torus bundles, that is, $A\in SL(2,\mathbb{Z})$.

It is known that the Heegaard genus of $M_A$
is~two or~three. Also the fundamental group of $M_A$ is a semi-direct product
$$\pi_1(M_A)\cong\pi_1(T^2)\rtimes\mathbb{Z}\cong\langle
x,y,t:x^t=x^\alpha y^\gamma,y^t=x^\beta y^\delta,[x,y]=1\rangle,$$
where 
$A=\left(
  \begin{smallmatrix}
    \alpha & \beta \\
    \gamma & \delta
  \end{smallmatrix}
\right)$.
The first homology group of $M_A$ is of the form
$$H_1(M_A)\cong\mathbb{Z}\oplus Coker(A-I)\cong\mathbb{Z}\oplus \mathbb{Z}_{n_1}\oplus
\mathbb{Z}_{n_2}$$
where $n_1|n_2$.

By \cite{saku1}, the following conditions are equivalent:
\begin{itemize}
\item $M_A$ is a double branched covering of $S^3$.
\item $n_1= 1,  2$. 
\item $A$ is conjugate to
  $\left(
    \begin{smallmatrix}
      -1 & -a \\
      b & ab-1
    \end{smallmatrix}
  \right)$
in $GL(2,\mathbb{Z})$ for some integers $a$ and~$b$.
\end{itemize}

Notice that 
$\left(
	\begin{smallmatrix}
	-1 & -a \\
	b & ab-1
	\end{smallmatrix}
	\right)=
\left(
	\begin{smallmatrix}
	0 & 1 \\
	1 & 0
	\end{smallmatrix}
	\right)
\left(
	\begin{smallmatrix}
	-1 & -b \\
	a & ab-1
	\end{smallmatrix}
	\right)^{-1}
\left(
	\begin{smallmatrix}
	0 & 1 \\
	1 & 0
	\end{smallmatrix}
	\right)
$; that is, the integers~$a$ and~$b$ are interchangeable.

If the Heegaard genus of $M_A$ is two, then $M_A$ is a double
branched 
covering of the 3-sphere. Also, $g(M_A)=2$ if and only if $A$ is
conjugate to 
$\left(
	\begin{smallmatrix}
	-1 & -1 \\
	b & b-1
	\end{smallmatrix}
	\right)
$ in $GL(2,\mathbb{Z})$ (see~\cite{saku1}).

If 
$A=\left(
	\begin{smallmatrix}
	-1 & -a \\
	b & ab-1
	\end{smallmatrix}
	\right)$,
write $M_{a,b}=M_A$. Then the numbers $a$ and $b$ are complete invariants of
$M_A$. That is, $M_{a_1,b_1}\cong M_{a_2,b_2}$ if and only if the sets
$\{a_1,b_1\}=\{a_2,b_2\}$, except for $M_{1,6}\cong M_{2,3}$ (see \cite{saku1},
Theorem~4).


\section{Power coverings of torus bundles.} 
\label{sec3}
In this section we prove
\begin{theorem} 
\label{thm31}
Let $\eta:\widetilde M\rightarrow M$ be a finite
  power covering space of torus bundles. If 
  the Heegaard genus of $M$ is three, then the Heegaard genus of
  $\widetilde M$ is also three.
\end{theorem}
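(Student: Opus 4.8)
The plan is to reduce the statement to a question about the monodromy matrix and a single integral binary quadratic form. By item~(1) of the Remark following Corollary~\ref{coro23}, a degree-$m$ power covering of $M=M_A$ is again a torus bundle whose monodromy is $A^m$, so $\widetilde M\cong M_{A^m}$. Since the Heegaard genus of a torus bundle is always $2$ or $3$, it suffices to prove the contrapositive: if $g(M_{A^m})=2$ then $g(M_A)=2$. To this end I would attach to every $A=\left(\begin{smallmatrix}\alpha&\beta\\\gamma&\delta\end{smallmatrix}\right)\in SL(2,\mathbb{Z})$ the form $Q_A(p,q)=\gamma p^2+(\delta-\alpha)pq-\beta q^2$, observing that $Q_A(p,q)=\det\left(\begin{smallmatrix}p&(Av)_1\\q&(Av)_2\end{smallmatrix}\right)$ for $v=\left(\begin{smallmatrix}p\\q\end{smallmatrix}\right)$, so that a conjugation $A\mapsto PAP^{-1}$ with $P\in GL(2,\mathbb{Z})$ changes $Q_A$ only by the unimodular substitution $v\mapsto Pv$ together with the overall sign $\det P$. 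In particular the property ``$Q_A$ represents $\pm1$ over $\mathbb{Z}$'' is a conjugacy invariant.

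The key lemma I would establish is that $g(M_A)=2$ if and only if $Q_A$ represents $\pm1$. Using the criterion recalled from \cite{saku1}, that $g(M_A)=2$ exactly when $A$ is $GL(2,\mathbb{Z})$-conjugate to $B_b=\left(\begin{smallmatrix}-1&-1\\b&b-1\end{smallmatrix}\right)$, both directions reduce to a companion-matrix normal form. For the forward direction one checks directly that $Q_{B_b}(0,1)=1$, so by conjugacy invariance $Q_A$ represents $\pm1$. For the converse, if $Q_A(v)=\pm1$ then $v$ is primitive (since $Q_A(dv)=d^2Q_A(v)$) and $\{v,Av\}$ is a $\mathbb{Z}$-basis of $\mathbb{Z}^2$; in this basis Cayley--Hamilton $A^2=(\operatorname{tr}A)A-I$, valid because $\det A=1$, puts $A$ into the companion form $\left(\begin{smallmatrix}0&-1\\1&\operatorname{tr}A\end{smallmatrix}\right)$. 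The same computation applied to $B_{\operatorname{tr}(A)+2}$ with cyclic vector $(0,1)$ realizes it as the identical companion matrix, whence $A\sim B_{\operatorname{tr}(A)+2}$ and $g(M_A)=2$.

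With the lemma in hand, the theorem follows from one observation about powers. Since $A^2=(\operatorname{tr}A)A-I$, every power lies in $\mathbb{Z}I\oplus\mathbb{Z}A$; write $A^m=s_mA+t_mI$ with $s_m,t_m\in\mathbb{Z}$. The scalar summand $t_mI$ cancels in the entry $\delta_m-\alpha_m$ and does not appear in $\beta_m,\gamma_m$, while the $s_mA$ summand scales all three relevant coefficients by $s_m$; hence $Q_{A^m}=s_mQ_A$. Now suppose $g(M_A)=3$, so $Q_A$ never equals $\pm1$. If $Q_{A^m}(p,q)=s_mQ_A(p,q)=\pm1$ for some $(p,q)$, then either $|s_m|\ge2$, which is impossible as $\pm1$ is not divisible by $s_m$, or $s_m=\pm1$, forcing $Q_A(p,q)=\pm1$ against the hypothesis. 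Thus $Q_{A^m}$ does not represent $\pm1$, so $g(M_{A^m})=3$, which is the contrapositive sought.

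The main obstacle is the integral (as opposed to merely rational) normal-form step inside the lemma: over $\mathbb{Z}$ two matrices with the same characteristic polynomial need not be conjugate, so the argument must exploit the existence of a genuine cyclic vector $v$ with $\{v,Av\}$ unimodular --- which is precisely what ``$Q_A$ represents $\pm1$'' supplies --- in order to realize both $A$ and $B_{\operatorname{tr}(A)+2}$ as the one companion matrix. Once this bridge between the \cite{saku1} conjugacy criterion and the arithmetic of $Q_A$ is secured, the scaling identity $Q_{A^m}=s_mQ_A$ makes the non-representability of $\pm1$ manifestly stable under taking powers, which is the entire content of the theorem.
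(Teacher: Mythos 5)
Your proof is correct, and it takes a genuinely different route from the paper's. Both arguments begin the same way, reducing to $\widetilde M\cong M_{A^n}$, but from there the paper proves $g(M_{A^n})=3$ by lower-bounding rank, which forces a case division along Sakuma's classification: Smith normal forms together with the divisibility Lemma~\ref{lemma20} handle Lemma~\ref{lemma21} and Lemma~\ref{lemma25}, Lustig's Fox-ideal bound \cite{Lu} is needed in Claim~\ref{lemma24} for $A=\left(\begin{smallmatrix}-1&-a\\0&-1\end{smallmatrix}\right)$ with odd exponent (where $H_1$ of the cover has rank only two, so homology alone cannot conclude), and Claims~\ref{lemma27} and~\ref{lemma28} require a generating-function analysis of the recursion $f(n)$ to get $A^n-I=f(n)B$ with $|f(n)|>1$. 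You bypass all of this by working with the form $Q_A(v)=\det\left(\begin{smallmatrix}p&(Av)_1\\q&(Av)_2\end{smallmatrix}\right)$ and proving the bridge lemma ``$g(M_A)=2$ if and only if $Q_A$ represents $\pm1$'' directly from the $GL(2,\mathbb{Z})$-conjugacy criterion of \cite{saku1}: your companion-matrix argument is sound, since a representation $Q_A(v)=\pm1$ makes $\{v,Av\}$ a unimodular basis, Cayley--Hamilton puts $A$ in the form $\left(\begin{smallmatrix}0&-1\\1&\operatorname{tr}A\end{smallmatrix}\right)$, and the matrix $\left(\begin{smallmatrix}-1&-1\\b&b-1\end{smallmatrix}\right)$ with $b=\operatorname{tr}A+2$ reaches the same companion form via the cyclic vector $(0,1)$. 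The scaling identity $Q_{A^m}=s_mQ_A$ (most quickly seen as $\det(v,A^mv)=s_m\det(v,Av)+t_m\det(v,v)$) then makes non-representability of $\pm1$ stable under powers, which is exactly the contrapositive needed; the only cosmetic omission is the case $s_m=0$, but then $Q_{A^m}\equiv0$ represents nothing, so the conclusion is unaffected. The trade-off between the two proofs: yours is uniform (no cases, no Fox calculus, no estimates) but leans entirely on the if-and-only-if genus-two classification of \cite{saku1}, while the paper's rank computations, though much longer, yield extra information --- explicit rank-three statements for $H_1$ or $\pi_1$ of the covers, which the paper reuses in its remark that power coverings of $M_{1,b}$ have genus three for $|b|\geq6$.
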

\begin{proof}
Let us assume that $M=M_A$ for some $A\in SL(2,\mathbb{Z})$, and fix
$\eta:\widetilde M\rightarrow M_A$ an $n$-fold  power covering. Then
$\widetilde M\cong M_{A^n}$. In the following lemmata we check that, if the Heegaard genus of
$M_A$ is three and~$n\geq2$, then the genus of~$M_{A^n}$ is at least three, by
computing the rank of~$\pi_1(M_{A^n})$ or of~$H_1(M_{A^n})$, which are lower
bounds for the genus. 
\end{proof}

Recall that the first homology group of
$M_{A^n}$ is
$$H_1(M_{A^n})\cong\mathbb{Z}\oplus Coker(A^n-I).$$
We repeatedly use the fact that
$$A^n-I=(A-I)(A^{n-1}+\cdots+A+I)$$

For reference purposes, we upgrade the following easy remark to a lemma.

\begin{lemma}
\label{lemma20}
Let $C$ be an $m\times m$ integral matrix such that there is an
integer $s\neq\pm1$ with $s|C_{i,j}$ for each $i,j$. Then the rank of
$Coker(C)$ is~$m$.
\end{lemma}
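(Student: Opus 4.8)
The guiding idea is that divisibility of every entry by $s$ forces $Coker(C)$ to surject onto a group that visibly requires $m$ generators, while $Coker(C)$ is itself generated by $m$ elements; pinning these two bounds together gives the result.

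First I would use the hypothesis to write $C=sC'$ for some integral matrix $C'$, which is possible precisely because $s$ divides each $C_{i,j}$. Consequently $Image(C)\subseteq s\mathbb{Z}^m$, and this inclusion of subgroups of $\mathbb{Z}^m$ induces a surjection of abelian groups
$$q:Coker(C)=\mathbb{Z}^m/Image(C)\twoheadrightarrow \mathbb{Z}^m/s\mathbb{Z}^m.$$
Next I would compute the rank of the target. If $s=0$ then $\mathbb{Z}^m/s\mathbb{Z}^m=\mathbb{Z}^m$ has rank $m$; if $|s|\geq2$, I would pick a prime $p$ dividing $s$ and compose with the reduction $\mathbb{Z}^m/s\mathbb{Z}^m\twoheadrightarrow\mathbb{Z}^m/p\mathbb{Z}^m\cong\mathbb{F}_p^m$, an $m$-dimensional vector space over $\mathbb{F}_p$ that cannot be generated by fewer than $m$ elements. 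Either way, the target of $q$ (after this final reduction when $|s|\geq2$) has rank $m$.

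The one fact I would then invoke is that a surjection of groups never increases the minimal number of generators, since the image of a generating set is a generating set of the quotient. Applying this to the composite surjection from $Coker(C)$ onto $\mathbb{F}_p^m$ (or onto $\mathbb{Z}^m$ when $s=0$) shows every generating set of $Coker(C)$ has at least $m$ members, so $rank(Coker(C))\geq m$. Conversely, $Coker(C)$ is generated by the images of the $m$ standard basis vectors of $\mathbb{Z}^m$, so $rank(Coker(C))\leq m$; combining the inequalities yields $rank(Coker(C))=m$. I do not anticipate a real obstacle here: the only step deserving care is the monotonicity of the minimal number of generators under surjection together with the detection of that number for the elementary quotient $\mathbb{F}_p^m$ via its $\mathbb{F}_p$-dimension, and the exclusion $s\neq\pm1$ is exactly what guarantees the quotient $\mathbb{Z}^m/s\mathbb{Z}^m$ is nontrivial in each coordinate.
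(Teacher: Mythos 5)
Your proof is correct, but it follows a genuinely different route from the paper's. The paper's proof goes through the Smith normal form: since the first invariant factor $t_1$ of $C$ is the greatest common divisor of the entries $C_{i,j}$, the hypothesis gives $s\mid t_1$, and the divisibility chain $t_1\mid t_2\mid\cdots\mid t_m$ then forces every invariant factor to be divisible by $s\neq\pm1$, so $Coker(C)\cong \mathbb{Z}_{t_1}\oplus\cdots\oplus\mathbb{Z}_{t_m}$ is a direct sum of $m$ nontrivial cyclic groups and has rank $m$. You bypass the structure theory entirely: from $Image(C)\subseteq s\mathbb{Z}^m$ you build a surjection $Coker(C)\twoheadrightarrow \mathbb{Z}^m/s\mathbb{Z}^m$, detect the lower bound $m$ by reducing modulo a prime $p\mid s$ (or noting $C=0$ when $s=0$) and counting $\mathbb{F}_p$-dimension, and then use that a surjection never increases the minimal number of generators, together with the trivial upper bound $m$. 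What your argument buys is self-containedness and elementarity -- it needs no normal-form theorem, only surjection-monotonicity of rank and linear algebra over $\mathbb{F}_p$; note that the paper's terse ``and the lemma follows'' implicitly relies on the same mod-$p$ detection to pass from ``$s$ divides every $t_i$'' to ``rank equals $m$.'' What the paper's argument buys is brevity and sharper information, namely the full invariant-factor description of the cokernel, which is in the spirit of the homology computations done elsewhere in Section~\ref{sec3}. Your handling of the degenerate case $s=0$ (where $C$ is the zero matrix and the cokernel is $\mathbb{Z}^m$) is also correct and is covered in the paper's version by all $t_i$ vanishing.
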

\begin{proof}
The Smith normal form of $C$ is
$\left(\begin{smallmatrix}
t_1\\
 &\ddots\\
& & t_m\\
\end{smallmatrix}\right)$
with $t_i|t_{i+1}$. Since~$s|C_{i,j}$ for each $i,j$, and
$t_1$ is the greatest common divisor of the $C_{i,j}$, then~$s|t_1$, 
and the lemma follows.

\end{proof}

\begin{lemma}
\label{lemma21}
If $M_A$ is not a double branched cover of the 3-sphere, and $n\geq2$, then the
rank of $H_1(M_{A^n})$ is three. 
\end{lemma}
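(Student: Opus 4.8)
The plan is to show that $H_1(M_{A^n}) \cong \mathbb{Z} \oplus \operatorname{Coker}(A^n - I)$ has rank three, which forces $g(M_{A^n}) \geq \operatorname{rank}(H_1(M_{A^n})) = 3$, and since the genus of any torus bundle is at most three, we conclude $g(M_{A^n}) = 3$. Concretely, it suffices to show that $\operatorname{Coker}(A^n - I)$ has rank two, i.e.\ that $A^n - I$ is divisible by some integer $s \neq \pm 1$ entrywise, so that Lemma~\ref{lemma20} applies with $m = 2$.

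First I would translate the hypothesis ``$M_A$ is not a double branched cover of $S^3$'' into the arithmetic condition from Section~\ref{sec22}: by the cited result of~\cite{saku1}, this is exactly the statement that $n_1 \neq 1, 2$, where $\operatorname{Coker}(A - I) \cong \mathbb{Z}_{n_1} \oplus \mathbb{Z}_{n_2}$ with $n_1 \mid n_2$. Thus $n_1 \geq 3$, and in particular $\operatorname{Coker}(A-I)$ already has rank two, meaning $A - I$ is entrywise divisible by $n_1$ (its first elementary divisor, which is the gcd of the entries). Equivalently, reducing mod any prime $p \mid n_1$, we have $A \equiv I \pmod{p}$.

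The key step is then to propagate this congruence to $A^n - I$. Working modulo a prime $p$ dividing $n_1$, since $A \equiv I \pmod p$ we get $A^n \equiv I^n = I \pmod p$, hence $A^n - I \equiv 0 \pmod p$; that is, $p$ divides every entry of $A^n - I$. With $s = p \neq \pm 1$, Lemma~\ref{lemma20} (taking $m = 2$) yields that $\operatorname{Coker}(A^n - I)$ has rank two. Adding the free $\mathbb{Z}$ summand gives $\operatorname{rank}(H_1(M_{A^n})) = 3$, completing the argument.

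I expect the main obstacle to be purely bookkeeping rather than conceptual: one must carefully verify the equivalence between the geometric hypothesis and the bound $n_1 \geq 3$ using the trichotomy quoted from~\cite{saku1}, and confirm that $n_1 \geq 3$ genuinely produces a prime $s \neq \pm 1$ dividing all entries of $A - I$ (this is where the identification of $n_1$ with the first elementary divisor, hence the gcd of the entries of $A - I$, is essential). The congruence step $A \equiv I \Rightarrow A^n \equiv I$ is immediate, so once the translation of the hypothesis is pinned down the result follows directly from Lemma~\ref{lemma20}.
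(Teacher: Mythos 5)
Your proof is correct and takes essentially the same route as the paper: both translate the hypothesis into $n_1\neq 1,2$ via Sakuma's criterion, observe that $n_1$ (the gcd of the entries of $A-I$, i.e.\ its first elementary divisor) then divides every entry of $A^n-I$, and conclude by Lemma~\ref{lemma20}. The only cosmetic difference is the mechanism of propagation --- the paper factors $A^n-I=(A-I)(A^{n-1}+\cdots+A+I)$ through the Smith normal form of $A-I$, while you reduce modulo a prime $p\mid n_1$; these are the same observation.
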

\begin{proof}
By \cite{saku1}, the Smith normal form of $A-I$ is 
$\left(
	\begin{smallmatrix}
	n_1 & 0 \\
	0 & n_2
	\end{smallmatrix}
	\right)
$ with $n_1|n_2$, and $n_1\neq1,2$. 

Then $H_1(M_{A^n})\cong\mathbb{Z}\oplus 
 Coker\left((A-I)(A^{n-1}+\cdots+A+I)\right)\cong\mathbb{Z}\oplus  Coker(
 \left(
   \begin{smallmatrix}
     n_1 & 0 \\
     0 & n_2
   \end{smallmatrix}
 \right) B)
$ for some matrix~$B$. Now $n_1$ divides all entries of the matrix~$\left(
  \begin{smallmatrix}
    n_1 & 0 \\
    0 & n_2
  \end{smallmatrix}
\right) B$, and Lemma~\ref{lemma20} applies.
\end{proof}

We assume now that $M_A$ is a double branched covering of $S^3$. Then
we may assume 
that~$A=
\left(
  \begin{smallmatrix}
    -1 & -a \\
    b & ab-1
  \end{smallmatrix}
\right)$. 
If the Heegaard genus of $M_A$ is three, then $|a|,|b|\neq1$.

\begin{lemma}
\label{zero}
If one of $a$ or $b$ is zero, and $n\geq2$, then the Heegaard genus of
$M_{A^n}$ is three.
\end{lemma}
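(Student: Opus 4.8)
The plan is to exploit the interchangeability of $a$ and $b$ recorded in Section~\ref{sec22} — which replaces $A$ by a conjugate of $A^{\pm1}$ and hence does not change $g(M_{A^n})$ — to reduce to the case $a=0$, where
$$A=\left(\begin{smallmatrix}-1&0\\ b&-1\end{smallmatrix}\right)=-\left(\begin{smallmatrix}1&0\\ -b&1\end{smallmatrix}\right),$$
and the hypothesis $g(M_A)=3$ forces $|b|\neq1$, so $b=0$ or $|b|\geq2$. Since the unipotent factor satisfies $\left(\begin{smallmatrix}1&0\\ -b&1\end{smallmatrix}\right)^{n}=\left(\begin{smallmatrix}1&0\\ -nb&1\end{smallmatrix}\right)$, a direct computation gives the closed form $A^{n}=(-1)^{n}\left(\begin{smallmatrix}1&0\\ -nb&1\end{smallmatrix}\right)$, and the two parities of $n$ behave differently, so I treat them separately.

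For $n$ even, $A^{n}=\left(\begin{smallmatrix}1&0\\ -nb&1\end{smallmatrix}\right)$, hence $A^{n}-I=\left(\begin{smallmatrix}0&0\\ -nb&0\end{smallmatrix}\right)$ and
$$H_1(M_{A^{n}})\cong\mathbb{Z}\oplus Coker(A^{n}-I)\cong\mathbb{Z}^{2}\oplus\mathbb{Z}_{nb}.$$
Since $|nb|\neq1$ (reading $\mathbb{Z}_0=\mathbb{Z}$ when $b=0$), this group has rank three; because $rank(H_1(M_{A^{n}}))\leq rank(\pi_1(M_{A^{n}}))\leq g(M_{A^{n}})\leq 3$, this already yields $g(M_{A^{n}})=3$, by the same homological estimate used in Lemma~\ref{lemma21}.

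For $n$ odd the homological bound is no longer enough, and this is the heart of the matter. Here $A^{n}=\left(\begin{smallmatrix}-1&0\\ nb&-1\end{smallmatrix}\right)$, and when $b$ is odd the matrix $A^{n}-I=\left(\begin{smallmatrix}-2&0\\ nb&-2\end{smallmatrix}\right)$ has determinant $4$ while the greatest common divisor of its entries is $1$, so its Smith normal form is $\left(\begin{smallmatrix}1&0\\ 0&4\end{smallmatrix}\right)$ and $H_1(M_{A^{n}})\cong\mathbb{Z}\oplus\mathbb{Z}_4$ has rank only two. Thus $rank(H_1)$ can no longer detect the genus. Instead I would observe that $\left(\begin{smallmatrix}-1&0\\ nb&-1\end{smallmatrix}\right)$ is exactly the normal form $\left(\begin{smallmatrix}-1&-a'\\ b'&a'b'-1\end{smallmatrix}\right)$ with $a'=0$ and $b'=nb$, so $M_{A^{n}}\cong M_{0,nb}$. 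By the genus criterion of~\cite{saku1} recalled in Section~\ref{sec22}, a genus-two torus bundle is conjugate to some $M_{1,b''}$; but $M_{0,nb}\cong M_{1,b''}$ would force $\{0,nb\}=\{1,b''\}$, which is impossible since $|nb|\neq1$. Hence $g(M_{A^{n}})\neq2$, and therefore $g(M_{A^{n}})=3$.

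The main obstacle is precisely this odd-$n$, odd-$b$ subcase, where $H_1(M_{A^{n}})\cong\mathbb{Z}\oplus\mathbb{Z}_4$ makes the rank-of-homology argument useless; the resolution is to abandon the homological bound there and read the genus off the fact that $A^{n}$ already sits in Sakuma's normal form $M_{0,nb}$, so that the classification of Section~\ref{sec22} applies verbatim. To be safe I would verify the degenerate readings: $b=0$ gives $M_{A^{n}}=T^{3}$ for $n$ even and $M_{A^{n}}=M_{0,0}\cong M_{-I}$ for $n$ odd, both of rank three; and I would check that the exceptional identification $M_{1,6}\cong M_{2,3}$ of Section~\ref{sec22} cannot make $M_{0,nb}$ genus two, since $0\notin\{2,3\}$.
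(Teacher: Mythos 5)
Your proposal is correct, but it resolves the decisive case by a genuinely different route than the paper. Both you and the paper reduce to one of $a,b$ being zero via the interchangeability remark of Section~\ref{sec22}, and both dispose of even $n$ by the same homological computation (your $\mathbb{Z}^2\oplus\mathbb{Z}_{nb}$ is exactly the paper's Claim~\ref{lemma23} combined with Lemma~\ref{lemma20}); you also correctly identify the obstruction for odd $n$, where $H_1$ can be $\mathbb{Z}\oplus\mathbb{Z}_4$ and the rank-of-homology bound dies. At that point the paper switches to group theory: in Claim~\ref{lemma24} it takes Fox derivatives of the relators of $\pi_1(M_B)$ for $B=\left(\begin{smallmatrix}-1&-\alpha\\0&-1\end{smallmatrix}\right)$, builds a ring homomorphism $\mathbb{Z}\pi_1(M_B)\to\mathbb{Z}_\alpha$ annihilating all Fox derivatives, and invokes Lustig's criterion~\cite{Lu} to get $rank(\pi_1(M_B))=3$. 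You instead observe that $A^n$ for odd $n$ is literally in Sakuma's normal form, so $M_{A^n}=M_{0,nb}$, and rule out genus two by the classification quoted in Section~\ref{sec22}: genus two would give $M_{0,nb}\cong M_{1,b''}$, forcing $\{0,nb\}=\{1,b''\}$ or the exceptional pair $M_{1,6}\cong M_{2,3}$, both impossible because $0$ lies in none of these sets and $nb\neq1$; since the genus of a torus bundle is two or three, it is three. Your argument is shorter and stays entirely within the results of~\cite{saku1} already recorded in Section~\ref{sec22}, avoiding group rings and Fox calculus altogether; what it does not deliver is the stronger conclusion the paper's computation yields, namely that the \emph{rank} of the fundamental group is itself three for these bundles (genus three alone does not imply rank three, so the Fox-ideal argument carries extra content relevant to the rank-versus-genus theme of the introduction). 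Your degenerate checks ($b=0$, and the exceptional identification $M_{1,6}\cong M_{2,3}$) are exactly the right ones and are handled correctly.
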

\begin{proof}
We may assume that $b=0$. Notice that
$$A^n=\left\{
\begin{array}{cl}
  \left(\begin{array}{cc}
      -1&-na\\
      0&-1\\
    \end{array}\right),
  &\textrm{ if } n \textrm{ is odd}\\ 
  \left(\begin{array}{cc}
      1&na\\
      0&1\\
    \end{array}\right),
  &\textrm{ if } n \textrm{ is even}\\ 
\end{array}\right.
$$

If also $a=0$, then 
$A^n-I=\left(
  \begin{smallmatrix}
    0 & 0 \\
    0 & 0
  \end{smallmatrix}
\right)
$
or
$\left(
  \begin{smallmatrix}
    -2 & 0 \\
    0 & -2
  \end{smallmatrix}
\right)
$.
Then Lemma~\ref{lemma20} applies,
and the rank of $H_1(M_{A^n})$ is three. Assume then that $a\neq0$.

\begin{sublemma}
\label{lemma23}
If 
$B=\left(\begin{smallmatrix}
    1&\alpha\\
    0&1\\
  \end{smallmatrix}\right)
$, and $|\alpha|\geq2$, then the rank of $H_1(M_{B})$ is three.
\end{sublemma}
\begin{proof}[Proof of Claim~\ref{lemma23}]
The Smith normal form of $B-I$ is 
$\left(\begin{smallmatrix}
    \alpha & 0\\
    0 & 0\\
  \end{smallmatrix}\right)
$.
Then Lemma~\ref{lemma20} applies to $H_1(M_B)\cong\mathbb{Z}\oplus Coker(B-I)$.
\end{proof}

\begin{sublemma}
\label{lemma24}
If
$B=\left(\begin{smallmatrix}
    -1 & -\alpha\\
    0 & -1\\
  \end{smallmatrix}\right)
$, and $|\alpha|\geq2$, then 
the rank of $\pi_1(M_B)$ is three.
\end{sublemma}
\begin{proof}[Proof of Claim~\ref{lemma24}]
The fundamental group of $M_B$ is
$$\pi_1(M_B)=\langle
x,y,t:txt^{-1}x,tyt^{-1}yx^\alpha,xyx^{-1}y^{-1}\rangle.$$
Write $r_1=txt^{-1}x$, $r_2=tyt^{-1}yx^\alpha$, and
$r_3=xyx^{-1}y^{-1}$. 

Also
write $d_1=\partial/\partial x$, $d_2=\partial/\partial y$, and
$d_3=\partial/\partial t$, the Fox derivatives. Then
$$d_1(r_2)=tyt^{-1}yP,\quad d_2(r_2)=tyt^{-1}+t,\quad 
d_3(r_2)=-tyt^{-1}+1$$ 
where $P=1+x+\cdots+x^{\alpha-1}$ if $\alpha>0$, and
$P=-(x^{-1}+x^{-2}+\cdots+x^\alpha)$ if 
$\alpha<0$. 
Also
$$d_1(r_1)=txt^{-1}+t,\quad d_2(r_1)=0,\quad d_3(r_1)=-txt^{-1}+1$$
and
$$d_1(r_3)=-xyx^{-1}+1,\quad
d_2(r_3)=-xyx^{-1}y^{-1}+x,\quad
d_3(r_3)=0$$

We define a ring homomorphism $\rho:\mathbb{Z}\pi_1(M_B)\rightarrow\mathbb{Z}_\alpha$, from the
group ring of $\pi_1(M_B)$ into the ring of integers
$\pmod\alpha$. First define
$$\rho(x)=1,\quad \rho(y)=1,\quad \rho(t)=-1.$$

Extending $\rho:\pi_1(M_B)\rightarrow\mathbb{Z}_\alpha$
multiplicatively, we see that $\rho(r_i)=1$ for $i=1,2,3$. 

Then we can extend $\rho$ to the ring group,
$\rho:\mathbb{Z}\pi_1(M_B)\rightarrow\mathbb{Z}_\alpha$, 
as a ring homomorphism sending 1 to 1. Also~$\rho$ sends all Fox derivatives
of the relators $r_1$, $r_2$ and $r_3$ into zero.

By \cite{Lu}, we conclude that rank$(\pi_1(M_B))=3$.
\end{proof}

To finish the proof of Lemma~\ref{zero}, we note that
claims~\ref{lemma23} and \ref{lemma24} give the number 3 as a lower bound for the genus
of $M_{A^n}$ when $n$ is even or odd, respectively. It follows that the genus
of $M_{A^n}$ is three.

\end{proof}

The remaining case is $|a|,|b|\geq2$ for
$A=
\left(
  \begin{smallmatrix}
    -1 & -a \\
    b & ab-1
  \end{smallmatrix}
\right)$. 

\begin{lemma}
\label{lemma25}
If $|a|,|b|\geq2$, and $n\geq2$, then the rank of
$H_1(M_{A^n})$ is three.
\end{lemma}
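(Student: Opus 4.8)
The plan is to reduce the statement to a divisibility fact and then apply Lemma~\ref{lemma20}. Since $H_1(M_{A^n})\cong\mathbb{Z}\oplus Coker(A^n-I)$, the rank of $H_1(M_{A^n})$ equals three exactly when $Coker(A^n-I)$ has rank two, and by Lemma~\ref{lemma20} this holds as soon as some integer $s\neq\pm1$ divides every entry of $A^n-I$. Phrasing this modulo a prime, it is enough to exhibit, for each $n\geq2$, a prime $p$ with $A^n\equiv I\pmod p$; then $p$ divides all entries of $A^n-I$, Lemma~\ref{lemma20} gives rank two for the cokernel, and hence rank three for $H_1(M_{A^n})$. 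So the whole problem turns into controlling the order of $A$ after reduction modulo a well-chosen prime.

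I would first set down two computational handles. By Cayley--Hamilton, $A^2=(ab-2)A-I$, so $A^n=s_nA-s_{n-1}I$ where $s_0=0$, $s_1=1$ and $s_{k+1}=(ab-2)s_k-s_{k-1}$; this writes each entry of $A^n-I$ as an explicit integer expression in $a$, $b$ and the $s_k$, which is what one searches through for a common prime divisor. Since $\det A=1$, one also has $\det(A^n-I)=2-\operatorname{tr}(A^n)$, which bounds the order of $Coker(A^n-I)$ and singles out the only primes that could possibly divide all entries. Throughout I would use the factorization $A^n-I=(A-I)(A^{n-1}+\cdots+A+I)$ to propagate a divisor of $A-I$ to $A^n-I$.

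Next I would split on the parities of $a$ and $b$ via the reduction of $A$ modulo $2$. If $a$ and $b$ are both even then $A\equiv I\pmod2$, so $2$ divides every entry of $A^n-I$ for every $n$ and Lemma~\ref{lemma20} closes this case at once. If exactly one of $a,b$ is even, the reduction of $A$ modulo $2$ is a transvection of order two, whence $A^n\equiv I\pmod2$ for all even $n$, again supplying the divisor $s=2$. These two observations settle all $n$ in the both-even case and all even $n$ in the mixed case.

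The hard part is what remains: $a$ and $b$ both odd, together with the odd values of $n$ in the mixed-parity case. When $a,b$ are both odd the reduction of $A$ modulo $2$ has order three, so reduction modulo $2$ yields a common divisor only when $3\mid n$; for the other residues of $n$ I would instead seek an \emph{odd} prime $p$ dividing $\det(A^n-I)=2-\operatorname{tr}(A^n)$ for which $A$ reduces modulo $p$ to an element whose order divides $n$, forcing $A^n\equiv I\pmod p$ and hence $p\mid$ every entry of $A^n-I$. Proving that such a prime can always be located --- equivalently, that the greatest common divisor of the entries of $A^n-I$ does not collapse to $1$ --- is the crux of the whole argument and the step I expect to demand the most care, since it is precisely here that the cyclic-versus-noncyclic structure of $Coker(A^n-I)$, and with it the distinction between rank two and rank three, is decided.
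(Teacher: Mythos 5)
Your opening moves match the paper's: the reduction via Lemma~\ref{lemma20} (a common divisor $s\neq\pm1$ of the entries of $A^n-I$ forces $\operatorname{rank}(Coker(A^n-I))=2$), the both-even case via $A\equiv I\pmod 2$, and your Cayley--Hamilton sequence $s_k$ with $s_{k+1}=(ab-2)s_k-s_{k-1}$, which is exactly the odd-indexed subsequence of the sequence $f$ the paper constructs (both satisfy the same recursion; $f(2k+1)=s_{k+1}$). But from there on you have a plan, not a proof: the step you yourself flag as ``the crux'' --- producing, in the remaining parity/residue cases, a prime $p$ with $A^n\equiv I\pmod p$ --- is the entire content of the lemma, and you supply no mechanism for it. The observation $\det(A^n-I)=2-\operatorname{tr}(A^n)$ only limits the candidate primes; it does not yield one. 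The paper's mechanism is explicit and uniform: Claims~\ref{lemma26} and~\ref{lemma27} establish the closed form $A^n+A^{n-1}+\cdots+A+I=f(n)\cdot(\textrm{integer matrix})$, so that $f(n-1)$ divides every entry of $A^n-I=(A-I)(A^{n-1}+\cdots+A+I)$, and Claim~\ref{lemma28} proves $|f(n)|>1$ for $|ab|\geq6$, $n\geq2$, by a generating-function computation with the roots $\varphi,\hat\varphi$ of $1+(2-ab)t+t^2$. (Note the paper, like you, first splits off the both-even case and then uses that $a,b$ not both even and $|a|,|b|\geq2$ force $|ab|\geq6$.) Nothing in your sketch replaces this quantitative step.

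Moreover, the missing step is not merely hard: the prime you want need not exist when $n=2$ and $a,b$ are both odd and coprime, so no search of the kind you describe can close your remaining cases in full generality. Indeed $A^2-I=(ab-2)A-2I$ has entries $-ab$, $-a(ab-2)$, $b(ab-2)$, $ab(ab-3)$; an odd prime dividing all of them must divide both $a$ and $b$, and $2$ divides them only if $ab$ is even, so for $a,b$ odd and coprime the gcd of the entries is $1$. Concretely, $a=3$, $b=5$ gives $A^2-I=\left(\begin{smallmatrix}-15&-39\\65&180\end{smallmatrix}\right)$ with cyclic cokernel $\mathbb{Z}_{165}$, hence $H_1(M_{A^2})\cong\mathbb{Z}\oplus\mathbb{Z}_{165}$ of rank two. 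Be aware that this subcase is delicate in the paper as well: Claim~\ref{lemma27} yields $A^n-I=f(n-1)B$, so the concluding appeal to $|f(n)|>1$ ($n\geq2$) covers only $n\geq3$, and $n=2$ with $a,b$ odd and coprime escapes the homological argument entirely. Any repair of your proposal must therefore isolate that subcase and bound the genus by other means --- for instance a rank-of-$\pi_1$ computation in the style of Claim~\ref{lemma24} (Fox derivatives and \cite{Lu}) rather than through $H_1$.
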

\begin{proof}
If $a$ and $b$ are both even integers, then the Smith normal form of
$A-I$ is
$\left(\begin{smallmatrix}
    -2 & 0\\
    0 & ab/2-2\\
  \end{smallmatrix}\right)
$, and, as in the proof of Lemma~\ref{lemma21}, Lemma~\ref{lemma20} applies to $Coker(A^n-I)$.

Then assume that $a$ and $b$ are not both even. Note that, then, $|ab|\geq6$.

We define $f(0)=f(1)=1$, and

$$f(n)=\left\lbrace
\begin{array}{cc}
f(n-1)-f(n-2), & \textrm{ if $n$ is odd}\\
abf(n-1)-f(n-2), & \textrm{ if $n$ is even}
\end{array}
\right.
$$
For convenience we define $f(-1)=0$.
\begin{sublemma}
\label{lemma26}
If $n\geq 1,$ then $A^n= \left(\begin{array}{cc}
	-f(2n-2)  & -af(2n-1) \\
	bf(2n-1) & f(2n)
	\end{array}
\right)$.
\end{sublemma}
\begin{proof}[Proof of Claim~\ref{lemma26}]
This follows by an easy induction on $n$.
\end{proof}

\begin{sublemma}
\label{lemma27}
If $n\geq 2,$ then $A^n+A^{n-1}+\cdots+A+I=$
$$ \left\lbrace
\begin{array}{cc}
f(n)\left(\begin{array}{cc}
	-abf(n-2)  & -af(n-1) \\
	bf(n-1) & abf(n)
	\end{array}
\right), & \textrm{ if $n$ is odd}\\
& \\
f(n)\left(\begin{array}{cc}
	-f(n-2)  & -af(n-1) \\
	bf(n-1) & f(n)
	\end{array}
\right), & \textrm{ if $n$ is even.}
\end{array}
\right.
$$
\end{sublemma}
\begin{proof}[Proof of Claim~\ref{lemma27}]
One can check directly that the lemma holds for $n=1,2$, and~$3$.

Assume that $n\geq3$.

\textsl{First Case:} ``$n$ is even''. Say $n=2k$.

Then, by Lemma~\ref{lemma26},

\noindent
\resizebox{\linewidth}{!}{%
$
\begin{array}{rl}
  A^n=& A^kA^k\\
  =&
  \left(\begin{array}{cc}
      -f(n-2)  & -af(n-1) \\
      bf(n-1) & f(n)
    \end{array}
  \right)^2\\
  =&
  \left(\begin{array}{cc}
      f(n-2)^2-abf(n-1)^2  & af(n-2)f(n-1)-af(n-1)f(n) \\
      -bf(n-2)f(n-1)+bf(n-1)f(n)) & -abf(n-1)^2+f(n)^2\\
    \end{array}
  \right).\\
\end{array}
$
}

Write $B=A^n+A^{n-1}+\cdots+A+I$. Then, by induction on $n$,
$$
B= A^n+
\begin{array}{cc}
  f(n-1)\left(\begin{array}{cc}
      -abf(n-3)  & -af(n-2) \\
      bf(n-2) & abf(n-1)
    \end{array}
  \right)
\end{array}
$$
for, $n-1$ is odd.
Equating elements

$$\begin{array}{rl}
B_{1,1}=&f(n-2)^2-abf(n-1)^2-abf(n-1)f(n-3)\\
=&f(n-2)^2-abf(n-1)(f(n-1)+f(n-3))\\
=&f(n-2)^2-abf(n-1)f(n-2), \quad\textrm{ for, $n-1$ is odd}\\
=&-f(n-2)(abf(n-1)-f(n-2))\\
=&-f(n-2)f(n), \quad\textrm{ for, $n$ is even}\\
\\
B_{1,2}=&af(n-2)f(n-1)-af(n-1)f(n)-af(n-1)f(n-2)\\
=&-af(n-1)f(n)\\
\\
B_{2,1}=&-bf(n-2)f(n-1)+bf(n-1)f(n)+bf(n-2)f(n-1)\\
=&bf(n-1)f(n)\\
\\
B_{2,2}=&-abf(n-1)^2+f(n)^2+abf(n-1)^2\\
=&f(n)^2,\\
\end{array}
$$
That is
$$A^n+A^{n-1}+\cdots+A+I=
\begin{array}{cc}
  f(n)\left(\begin{array}{cc}
      -abf(n-2)  & -af(n-1) \\
      bf(n-1) & abf(n)
    \end{array}
  \right).
\end{array}
$$

\textsl{Second Case:} ``$n$ is odd''. Say $n=2k+1$.

Now, by Lemma~\ref{lemma26},

\resizebox{\linewidth}{!}{%
$
\arraycolsep=1.8pt\def\arraystretch{1.4}
\begin{array}{rl}
  A^n=& A^{k+1}A^k\\
  =&
  \left(\begin{array}{cc}
      -f(n-1)  & -af(n) \\
      bf(n) & f(n+1)
    \end{array}
  \right)
  \left(\begin{array}{cc}
      -f(n-3)  & -af(n-2) \\
      bf(n-2) & f(n-1)
    \end{array}
  \right)\\

=&
\left(\begin{array}{cc}
    f(n-3)f(n-1)-abf(n-2)f(n)  &
    af(n-1)f(n-2)-af(n-1)f(n) \\
    -bf(n-3)f(n)+bf(n-2)f(n+1) & 
    -abf(n-2)f(n)+f(n-1)f(n+1)\\
  \end{array}
\right).\\
\end{array}
$
}

\noindent
Write $B=A^n+A^{n-1}+\cdots+A+I$. Then, by induction on $n$,
$$
B= A^n+
\begin{array}{cc}
  f(n-1)\left(\begin{array}{cc}
      -f(n-3)  & -af(n-2) \\
      bf(n-2) & f(n-1)
    \end{array}
  \right)
\end{array}
$$
for, $n-1$ is even.
Equating
$$\begin{array}{rl}
B_{1,1}=&f(n-3)f(n-1)-abf(n-2)f(n)-f(n-3)f(n-1)\\
=&-abf(n-2)f(n)\\
\\
B_{1,2}=&af(n-2)f(n-1)-af(n-1)f(n)-af(n-2)f(n-1)\\
=&-af(n-1)f(n)\\
\\
B_{2,1}=&-bf(n-3)f(n)+bf(n-2)f(n+1)+bf(n-2)f(n-1)\\
=&-bf(n-3)f(n)+bf(n-2)(f(n+1)+f(n-1))\\
=&-bf(n-3)f(n)+bf(n-2)abf(n) \quad\textrm{ for, $n+1$ is even}\\
=&bf(n)(abf(n-2)-f(n-3))\\
=& bf(n)f(n-1) \quad\textrm{for, $n-1$ is even}
\\
B_{2,2}=&-abf(n-2)f(n)+f(n-1)f(n+1)+f(n-1)^2\\
=&-abf(n-2)f(n)+f(n-1)(f(n+1)+f(n-1))\\
=&-abf(n-2)f(n)+f(n-1)abf(n) \quad\textrm{for, $n+1$ is even}\\
=&abf(n)(f(n-1)-f(n-2))\\
=&abf(n)^2  \quad\textrm{for, $n$ is odd}
\end{array}
$$
That is
$$A^n+A^{n-1}+\cdots+A+I=
\begin{array}{cc}
  f(n)\left(\begin{array}{cc}
      -abf(n-2)  & -af(n-1) \\
      bf(n-1) & abf(n)
    \end{array}
  \right).
\end{array}
$$

\end{proof}

\begin{sublemma} 
\label{lemma28}
If $|ab|\geq6$, and $n\geq2$, then $|f(n)|>1$.
\end{sublemma}
\begin{proof}[Proof of Claim~\ref{lemma28}]

Consider the `generating function' $G(z)=\sum_{i=0}^\infty
f(i)z^i$. The even summands of $G(z)$ are given by
$\frac12(G(z)+G(-z))$, and the odd summands by $\frac12(G(z)-G(-z))$.
Then, by the definition of the sequence~$f$,
$$\frac12(G(z)+G(-z))-\frac12abz(G(z)-G(-z))+\frac12z^2(G(z)+G(-z))=1$$
and
$$\frac12(G(z)-G(-z))-\frac12z(G(z)+G(-z))+\frac12z^2(G(z)-G(-z))=0,$$
that is,
$$(1-abz+z^2)G(z)+(1+abz+z^2)G(-z)=2$$
and
$$(1-z+z^2)G(z)+(-1-z-z^2)G(-z)=0.$$
Thus
$$G(-z)=\frac{1-z+z^2}{1+z+z^2}G(z)$$
and, substituting, 
$$G(z)=\frac{1+z+z^2}{1+(2-ab)z^2+z^4}.$$

Write
$$\displaystyle\varphi=\frac{-(2-ab)+\sqrt{(2-ab)^2-4}}{2},\quad
\displaystyle\hat\varphi=\frac{-(2-ab)-\sqrt{(2-ab)^2-4}}{2},$$
which are the solutions of $1+(2-ab)t+t^2=0$. Notice that
$\varphi\hat\varphi=1$, 
and $\varphi+\hat\varphi=-(2-ab)$. Then
$$G(z)=\frac{1+z+z^2}{(1-\varphi z^2)(1-\hat\varphi z^2)}.$$
Since $|ab|\geq6$,  we have $\varphi\neq\hat\varphi$.
We can write 
$$\alpha=-\frac{1+\varphi+\varphi z}{\hat\varphi-\varphi},\quad 
\beta=\frac{1+\hat\varphi+\hat\varphi z}{\hat\varphi-\varphi},$$
then
$$G(z)=\frac{\alpha}{1-\varphi z^2}+\frac{\beta}{1-\hat\varphi z^2}.$$
Now
$$\frac{1}{1-\varphi z^2}=\sum_{i=0}^\infty \varphi^i z^{2i}, \quad
\frac{1}{1-\hat\varphi z^2}=\sum_{i=0}^\infty \hat\varphi^i z^{2i},$$
then
$$G(z)=\sum_{i=0}^\infty\frac{\hat\varphi^i-\varphi^i+\hat\varphi^{i+1}-\varphi^{i+1}}{\hat\varphi-\varphi}z^{2i}+\sum_{i=0}^\infty\frac{\hat\varphi^i-\varphi^i}{\hat\varphi-\varphi}z^{2i+1},$$
that is,
$$f(n)=\left\{
  \begin{array}{ll}
    \displaystyle\frac{\hat\varphi^k-\varphi^k+\hat\varphi^{k+1}-\varphi^{k+1}}{\hat\varphi-\varphi}
    &\textrm{ if } n=2k, n\geq0\\
    \displaystyle\frac{\hat\varphi^k-\varphi^k}{\hat\varphi-\varphi} &\textrm{ if } n=2k+1,n\geq1 
  \end{array}\right.
$$
%
Recall that
$(x^m-y^m)/(x-y)=\sum_{i=0}^{m-1}x^{m-1-i}y^i.$
We have that, by definition,~$\varphi>\hat\varphi$.
Also, since $\varphi\hat\varphi=1$, either both $\varphi,\hat\varphi>0$ or both
$\varphi,\hat\varphi<0$. 

Assume that $n$ is odd, say, $n=2k+1$. Then if both $\varphi,
\hat\varphi>0$, it follows that $\varphi>1$, and
$$
f(n)
=\displaystyle\frac{\hat\varphi^k-\varphi^k}{\hat\varphi-\varphi} 
=\displaystyle\sum_{i=0}^{k-1}\varphi^{k-1-i}\hat\varphi^i>\varphi^{k-1}>1,
\textrm{ if $k>1$.}
$$
If both $\varphi, \hat\varphi<0$, then $\hat\varphi<-1$, and
$$
f(n)
=\displaystyle\frac{\hat\varphi^k-\varphi^k}{\hat\varphi-\varphi} \\
=\displaystyle\sum_{i=0}^{k-1}\hat\varphi^{k-1-i}\varphi^i=
\hat\varphi^{k-1}+\sum_{i=1}^{k-1}\hat\varphi^{k-1-i}\varphi^i.
$$

If $k$ is odd, then $f(n)>\hat\varphi^{k-1}>1$ when $k>1$. If $k$ is
even, then $f(n)<\hat\varphi^{k-1}<-1$ when $k>0$, and then $|f(n)|>1$.

In any case $|f(n)|>1$ for $n$ odd and $n\geq5$.
We check directly $|f(3)|=|2-ab|\geq4$ for, $|ab|\geq6$.

%
Now if $n$ is even, say, $n=2k$, then
$$
\arraycolsep=1.4pt\def\arraystretch{2.6}
\begin{array}{rl}
\displaystyle
f(n)=&\displaystyle\frac{\hat\varphi^k-\varphi^k}{\hat\varphi-\varphi}+\frac{\hat\varphi^{k+1}-\varphi^{k+1}}{\hat\varphi-\varphi}\\
=&\displaystyle\sum_{i=0}^{k-1}\hat\varphi^{k-1-i}\varphi^i+\sum_{j=0}^{k}\hat\varphi^{k-j}\varphi^j.
\end{array}
$$

Thus, if $\varphi,\hat\varphi>0$, $f(n)>\varphi^{k}>1$ for $n$ even and $\geq2$. 

 If $\varphi$ and $\hat\varphi$  both are negative numbers, 
then
 $\hat\varphi<-1<\varphi<0$. Notice that,
 $\varphi<0$ implies $2-ab>0$. Since $|ab|\geq6$, we have $|2-ab|\geq4$.
Thus
$\sqrt{(2-ab)^2-4}\geq\sqrt{4^2-4}=\sqrt{12}$. 

Then
$$\hat\varphi=\frac{-(2-ab)-\sqrt{(2-ab)^2-4}}{2}\leq\frac{-4-\sqrt{12}}{2}=-2-\sqrt3<-3$$ 
and
since $1=\varphi\hat\varphi>\varphi(-3)$
we have that
$$0>\varphi>-1/3 \ \ \textrm{ and }\ \ \  \hat\varphi<-3.$$
We compute
%

$$
\arraycolsep=1.4pt\def\arraystretch{2.6}
\begin{array}{rl}
\displaystyle
f(n)=&\displaystyle\sum_{i=0}^{k-1}\hat\varphi^{k-1-i}\varphi^i+\sum_{j=0}^{k}\hat\varphi^{k-j}\varphi^j\\
=&\displaystyle\varphi\hat\varphi\sum_{i=0}^{k-1}\hat\varphi^{k-1-i}\varphi^i+\sum_{j=0}^{k}\hat\varphi^{k-j}\varphi^j
\\
=&\displaystyle\varphi\hat\varphi\hat\varphi^{k-1}\sum_{i=0}^{k-1}\varphi^{2i}+\hat\varphi^{k}\sum_{j=0}^{k}\varphi^{2j} 
\\
=&\displaystyle\hat\varphi^{k}\sum_{i=0}^{k-1}\varphi^{2i+1}+\hat\varphi^{k}\sum_{j=0}^{k}\varphi^{2j} 
\\
=&\displaystyle\hat\varphi^{k}\sum_{i=0}^{2k}\varphi^{i}
\\
=&\displaystyle\hat\varphi^{k}\frac{\varphi^{2k+1}-1}{\varphi-1}.\\

\end{array}
$$
The distance from $\varphi$ to 1 is $|\varphi-1|<1+1/3=4/3$ for,
$0>\varphi>-1/3$; and the distance $|\varphi^{2k+1}-1|>1$ for,
$\varphi^{2k+1}<0$. Thus
$$|f(n)|=\displaystyle|\hat\varphi^{k}|\frac{|\varphi^{2k+1}-1|}{|\varphi-1|}>\frac34|\hat\varphi^{k}|>\frac34
3^k>1$$
for $k\geq1$.

Therefore, if $\varphi,\hat\varphi<0$, $|f(n)|>1$ for each $n$ even and
$n\geq2$. 

In any case, $|f(n)|>1$ for each $n\geq2$. 
\end{proof}

To finish the proof of Lemma~\ref{lemma25}, we have that, by
claims~\ref{lemma27} and~\ref{lemma28}, $A^n-I=f(n)B$ for some matrix $B$, and
$|f(n)|>1$ if~$n\geq2$. 
By Lemma~\ref{lemma20},
it follows that $H_1(M_{A^n})=\mathbb{Z}\oplus Coker(A^n-I)$ has rank
three for $n\geq2$.
\end{proof}

\begin{remark}
Notice that, following the proof of previous lemma, we see that for a torus
bundle $M\cong M_{1,b}$, the
power coverings of $M$ have genus three if $|b|\geq6$. 

\end{remark}


\section{Fiber coverings of torus bundles.}
\label{sec4}
Let $T_A\hookrightarrow M_A\rightarrow S^1$ be a torus bundle over $S^1$ with
monodromy~$A\in SL(2,\mathbb{Z})$. 
Write $\widetilde{M}_A=T_A\times\mathbb{R}$ for the
infinite cyclic covering $u:\widetilde{M}_A\rightarrow M_A$ corresponding to the
subgroup~$\pi_1(T_A)\leq\pi_1(M_A)$. 
We identify~$\pi_1(T_A)$ with $\pi_1(\widetilde{M}_A)$ through the
isomorphism induced by the inclusion $T_A\hookrightarrow \widetilde{M}_A$.
The group of covering transformations of $u$
is generated by $\varphi_A:\widetilde{M}_A\rightarrow\widetilde{M}_A$ given
by $\varphi_A(z,s)=(A(z),s+1)$. 
The induced homomorphism
$(\varphi_A)_\#:\pi_1(\widetilde{M}_A)\rightarrow\pi_1(\widetilde{M}_A)$
acts as the matrix~$A$, and we  abuse notation writing $\varphi_A=(\varphi_A)_\#$. Then we have an action of the ring group
$\mathbb{Z}\langle t\rangle$ on $\pi_1(\widetilde{M}_A)$ given by
$t\cdot c=\varphi_A(c)$ for
each~$c\in\pi_1(\widetilde{M}_A)$, where $\langle t\rangle$ is the infinite cyclic
group generated by~$t$. The structure of~$\mathbb{Z}\langle
t\rangle$-module on~$\pi_1(\widetilde{M}_A)$ obtained by
this action is
denoted by $H_A$. Notice that~$\pi_1(M_A)\cong H_A\rtimes_{\varphi_A}\langle
t\rangle$. 

For a covering of fibers of torus bundles, the notation $\eta:M_B\rightarrow
M_A$ is reserved, and implies the following statement: If $T_A$ is a fiber
of~$M_A$ and $T_B=\eta^{-1}(T_A)$, then the diagram
$$
  \begin{tikzcd}
    {T_B}\arrow{r}{B} \arrow[swap]{d}{\eta|} & {T_B} \arrow{d}{\eta|}\\
    T_A\arrow{r}{A} & T_A\\
  \end{tikzcd}
  $$  
commutes.


\begin{theorem}
\label{lemma42}
	Let $L\leq H_A$ be an additive subgroup. Then the following are
        equivalent. 
\begin{enumerate}
\item $L$ is a $\mathbb{Z}\langle t\rangle$-submodule of $H_A$ of index $n$.
\item There are $B\in SL_2(\mathbb{Z})$ a matrix, and $h_1:H_B\to L$ a
  $\mathbb{Z}\langle t\rangle$-isomorphism such that 
  ${h_1\rtimes1}:H_B\rtimes_{\varphi_B} \langle t \rangle \rightarrow
  H_A\rtimes_{\varphi_A} \langle t \rangle$ 
  is a monomorphism with image of index~$n.$ 
\item There are $B\in SL_2(\mathbb{Z})$ a matrix and a commutative
  diagram of fiber preserving covering
  spaces 
  $$
  \begin{tikzcd}
    \widetilde{M_B}\arrow{r} \arrow[swap]{d}{u} & \widetilde{M_A} \arrow{d}{v}\\
    M_B\arrow{r} & M_A\\
  \end{tikzcd}
  $$  
  where the vertical arrows, $u$ and $v$, are the infinite cyclic coverings of
  $M_B$ and $M_A$, respectively, and the horizontal arrows are
  $n$-fold coverings which are coverings of fibers.
\end{enumerate}
\end{theorem}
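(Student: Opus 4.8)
The plan is to establish the cyclic chain $(1)\Rightarrow(2)\Rightarrow(3)\Rightarrow(1)$, treating $(1)\Leftrightarrow(2)$ as linear algebra over~$\mathbb{Z}$ and $(2)\Leftrightarrow(3)$ as the covering-space dictionary made geometric. For $(1)\Rightarrow(2)$, recall that $H_A$ is the free abelian group $\pi_1(T_A)\cong\mathbb{Z}^2$ on which $t$ acts as~$A$. Since $L$ is a $\mathbb{Z}\langle t\rangle$-submodule it is invariant under both $t$ and~$t^{-1}$, so $A(L)=L$; being of index~$n$ it is a rank-two sublattice. I would take an integral basis of~$L$ as the columns of a $2\times2$ integral matrix~$P$ with $|\det P|=n$, and let~$B$ be the integral matrix of~$A|_L$ in that basis, so that $AP=PB$. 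Then $\det B=\det A=1$ and $B$ is invertible over~$\mathbb{Z}$, so $B\in SL_2(\mathbb{Z})$. Setting $h_1:=P\colon H_B\to L$, the relation $AP=PB$ is exactly the assertion that $h_1(t\cdot c)=t\cdot h_1(c)$, so $h_1$ is a $\mathbb{Z}\langle t\rangle$-isomorphism onto~$L$; a direct check with the product rule $(c_1,t^{k_1})(c_2,t^{k_2})=(c_1+A^{k_1}c_2,t^{k_1+k_2})$ shows $h_1\rtimes1$ is an injective homomorphism whose image $L\rtimes_{\varphi_A}\langle t\rangle$ has index $[H_A:L]=n$.

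For $(2)\Rightarrow(3)$, the injective lattice map $h_1\colon H_B\hookrightarrow H_A$ with image~$L$ of index~$n$ is realized by an $n$-fold covering of tori $\psi\colon T_B\to T_A$ with $\psi_\#=h_1$. The module identity $h_1\circ B=A\circ h_1$ translates into the commuting monodromy square $\psi\circ B=A\circ\psi$, which is precisely the defining square of the notation $\eta\colon M_B\to M_A$. Hence $\psi\times1\colon T_B\times\mathbb{R}\to T_A\times\mathbb{R}$ intertwines the covering translations $\varphi_B$ and~$\varphi_A$ and descends to a map $M_B\to M_A$; placing $\psi\times1$ on top and the infinite cyclic coverings $u,v$ on the sides yields the square of~$(3)$, and by the characterization of coverings of fibers recalled in the preliminaries this descended map is an $n$-fold covering of fibers.

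For $(3)\Rightarrow(1)$, I would read off $\psi\colon T_B\to T_A$ from the top arrow and set $L:=\psi_\#(H_B)\leq H_A$. The commuting monodromy square forces $A(L)=L$, so $L$ is a $\mathbb{Z}\langle t\rangle$-submodule, and the degree of the fiber covering equals the lattice index $[H_A:L]$, which is~$n$.

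The step I expect to be the main obstacle is $(2)\Rightarrow(3)$: one must ensure that the abstract monomorphism $h_1\rtimes1$ is realized not by an arbitrary index-$n$ covering but specifically by a \emph{covering of fibers} in the technical sense defined above, and that the covering space produced is genuinely the torus bundle $M_B$ with monodromy~$B$, with covering degree matching the lattice index. The observation, already isolated in the preliminaries, that $\psi\times1$ commutes with the covering translations is what legitimizes the descent and pins down the fiber-covering structure.
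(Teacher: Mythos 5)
Your proposal is correct and follows essentially the same route as the paper: $B$ is the matrix of $A|_L$ in a chosen basis of the sublattice, $h_1\rtimes 1$ is checked to be a monomorphism with the coset count $[H_A\rtimes\langle t\rangle : L\rtimes\langle t\rangle]=[H_A:L]$, the implication $(2)\Rightarrow(3)$ is realized by lifting $A$ through the torus covering $\psi$ corresponding to $L$ and descending $\psi\times 1$, and $(3)\Rightarrow(1)$ uses compatibility of $\psi\times1$ with the covering translations $\varphi_B,\varphi_A$. The only structural difference is that you close the single cycle $(1)\Rightarrow(2)\Rightarrow(3)\Rightarrow(1)$, while the paper additionally proves $(2)\Rightarrow(1)$ directly; both suffice.
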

\begin{proof}
``(1) $\Rightarrow$ (2)''. 
Let $L\leq H_A$ be an additive subgroup.

Assume that $L$ is a $\mathbb{Z}\langle t\rangle$-submodule of $H_A$ of index
$n$. Write $H_A=\langle x, y\rangle$; then $L=\langle a_1,a_2\rangle=\langle
x^py^q,x^sy^r\rangle\leq H_A$ with 
$det
\left(
  \begin{smallmatrix}
    p & s \\
    q & r\\
  \end{smallmatrix}
\right)
=n$. Now write~$B=A|L:L\rightarrow L\in SL(2,\mathbb{Z})$, which is
the matrix $A$ 
written 
in 
terms of the basis $\{a_1,a_2\}$. 

If $T_A$ is the fiber of $M_A$, let $\eta:T_B\rightarrow T_A$ be the
covering space corresponding to $L$, regarded as a subgroup of
$\pi_1(T_A)$. Let $b_1,b_2\in\pi_1(T_B)$ be the elements such that
$\eta_\#(b_i)=a_i$ ($i=1,2$). Then $H_B$ is generated by $b_1$ and
$b_2$, regarded as elements of $\pi_1(\widetilde M_B)$. We define~$h_1:H_B\rightarrow L$ as the linear extension of $b_1\mapsto a_1$ and
$b_2\mapsto a_2$. Then $h_1$ is an isomorphism, and, since $B=A|L$, it
follows that $h_1$ is a
$\mathbb{Z}\langle t\rangle$-morphism.

We have inclusions
%
$$\begin{tikzcd}
H_A \arrow{r}{k} & H_A \rtimes_{\varphi_A}\langle t\rangle & \langle t\rangle
\arrow[swap]{l}{\ell},
\end{tikzcd}
$$
$$
\begin{tikzcd}
H_B \arrow{r}{i} & H_B \rtimes_{\varphi_B}\langle t\rangle & \langle t\rangle
\arrow[swap]{l}{j}.
\end{tikzcd}
$$
Write $f=(k|L)\circ h_1$, and consider the commutative diagram
$$
\begin{tikzcd}
{} & H_B \rtimes_{\varphi_B}\langle t\rangle \arrow[dashed]{ddd}{h} \\
H_B \arrow{ru}{i} \arrow[swap, bend right=10]{d}{h_1} \arrow{ddr}{f} & & \langle t\rangle \arrow[swap]{lu}{j} \arrow{ldd}{\ell}\\
L \arrow[swap, bend right=10]{rd}{k} \\
{} & H_A \rtimes_{\varphi_A}\langle t\rangle\\
\end{tikzcd}
$$
Now $\ell(t)f(m)\ell(t^{-1})=f(\varphi_A(t)(m))$. Indeed, we compute
$\ell(t)f(m)\ell(t^{-1})=(1,t)(h_1(m),1)(1,t^{-1})=(\varphi_A(t)(h_1(m)),1)$,
and also $f(\varphi_A(t)(m))= (h_1(\varphi_A(t)(m)),1)$. Since $h_1$ is a
$\mathbb{Z}\langle t\rangle$-morphism, we obtain
$\varphi_A(t)(h_1(m))=h_1(\varphi_A(t)(m))$.

Then, by the Universal Property of Semi-direct Products, the arrow
$h=f\rtimes_{\varphi_B}\ell$ such that $h(m,t)=(h_1(m),t)$ is a 
homomorphism. If $h(m,t^a)=h(n,t^b)$, then $h_1(m)=h_1(n)$ and $t^a=t^b$;
it follows that $m=n$, and $a=b$. Thus $h$ is a monomorphism.

Now the functions
$$
\begin{tikzcd}
\displaystyle
\frac{H_A\rtimes_{\varphi_A}\langle t\rangle}{L\rtimes_{\varphi_{B}}\langle t\rangle} 
\arrow[yshift=0.7ex]{r}{\alpha}
 &\displaystyle
\frac{H_A}{L} \arrow[yshift=-0.7ex]{l}{\beta}\\
\end{tikzcd}
$$
given by $$\alpha((a,t^k)\cdot L\rtimes_{\varphi_{B}}\langle
t\rangle)=\varphi_A(t^{-k})(a)\cdot L$$ and $$\beta(a\cdot
L)=(a,1)\cdot(L\rtimes_{\varphi_{B}}\langle t\rangle)$$
satisfy~$\alpha\beta=1$, and $\beta\alpha=1$.

Therefore the indices $n=[H_A:L]=[H_A\rtimes_{\varphi_{A}}\langle t\rangle:L\rtimes_{\varphi_{B}}\langle
t\rangle]$.

``(2) $\Rightarrow$ (1)''. Now assume that (2) holds. If $h_1(b)=a$, then
$(\varphi_A(t)(a),1)=(1,t)(a,1)(1,t^{-1})=h(t)h(b)h(t^{-1})=(h_1\rtimes
1)((1,t)(b,1)(1,t^{-1}))=(h_1\rtimes 1)(\varphi_B(t)(b),1)$. Thus
$\varphi_A(t)(a)=h_1(\varphi_B(h_1(b)))=\varphi_B(t)(a)$.

Then $\varphi_A(t)(L)\subset L$ and $\varphi_A(t)(a)=\varphi_B(t)(a)$ for each
$a\in L$.

As above, the functions 
$$
\begin{tikzcd}
\displaystyle
\frac{H_A\rtimes_{\varphi_A}\langle t\rangle}{L\rtimes_{\varphi_{B}}\langle t\rangle} 
\arrow[yshift=0.7ex]{r}{\alpha}
 &\displaystyle
\frac{H_A}{L} \arrow[yshift=-0.7ex]{l}{\beta}\\
\end{tikzcd}
$$
are bijections, and thus $[H_A:L]=n$.

``(2) $\Rightarrow$ (3)''. By (2), notice that $B=A|L$. Then the
matrix $B$ solves the `lifting problem' 
$$
\begin{tikzcd}
H_B\arrow{r}{B} \arrow{d} & H_B \arrow{d}\\
H_A\arrow{r}{A} & H_A\\
\end{tikzcd}
$$
where the vertical arrows are inclusions.
Thus, if $\psi:\tilde T\rightarrow T$ is the covering space of the torus
corresponding to $H_B\leq H_A$, then $B$ also solves the topological
lifting problem 
$$
\begin{tikzcd}
\tilde T\arrow{r}{B} \arrow[swap]{d}{\psi} & \tilde T \arrow{d}{\psi}\\
T\arrow{r}{A} & T\\
\end{tikzcd}
$$

One can then define $\eta:M_B\rightarrow  M_A$ an $n$-fold covering
of fibers.

Thus we obtain a diagram
$$
\begin{tikzcd}
\tilde T\times\mathbb{R}\arrow{r}{\psi\times1} \arrow[swap]{d}{u} &  T\times\mathbb{R} \arrow{d}{v}\\
M_B\arrow{r}{\eta} & M_A\\
\end{tikzcd}
$$
where the vertical arrows are infinite cyclic coverings, as required.

``(3) $\Rightarrow$ (1)''. Since we have that the diagram
$$
\begin{tikzcd}
\widetilde{M_B}\arrow{r}{\psi\times1} \arrow[swap]{d}{u} & \widetilde{M_A} \arrow{d}{v}\\
M_B\arrow{r} & M_A\\
\end{tikzcd}
$$  
commutes, 
we see that $\psi\times1$ is compatible with the covering transformations of
$\widetilde{M_B}$ and $\widetilde{M_A}$; that is,
$(\psi\times1)\circ\varphi_B=\varphi_A\circ(\psi\times1)$. It
follows that  $H_B$
is a $\mathbb{Z}\langle t\rangle$-submodule of $H_A$.
\end{proof}

\begin{corollary}
\label{coro41}
	Let $M_A$ be a torus bundle and	let $\varphi:\tilde{M}\to M_A$ be an
        $n$-fold covering of fibers.
If~$A=\left(
	\begin{smallmatrix}
	\alpha & \beta \\
	\gamma & \delta
	\end{smallmatrix}
	\right),
	$
then $\tilde{M}= M_{B},$ where 
$$
B=
\left(
  \begin{array}{cc}
    \displaystyle
    \frac{det\left(
        \begin{smallmatrix}
          p\alpha + q\beta & s\\
          p\gamma + q\delta & r
        \end{smallmatrix}
      \right)}{n}
    &
    \displaystyle
    \frac{det\left(
     	\begin{smallmatrix}
          s\alpha + r\beta & s\\
          s\gamma + r\delta & r
     	\end{smallmatrix}
      \right)}{n}
    \\  
    & \\
    \displaystyle
    \frac{det\left(
        \begin{smallmatrix}
          p & p\alpha + q\beta\\
          q & p\gamma + q\delta
        \end{smallmatrix}
      \right)}{n}
    &
    \displaystyle
    \frac{det\left(
        \begin{smallmatrix}
          p & s\alpha + r\beta \\
          q & s\gamma + r\delta
        \end{smallmatrix}
      \right)}{n}
  \end{array}
\right),
$$	
and
$\left(
  \begin{smallmatrix}
    p & s \\
    q & r\\
  \end{smallmatrix}
\right)
$
has determinant $n$, and corresponds to the subgroup~$H_B=\langle
x^py^q,x^sy^r\rangle\leq H_A$ determined by $\varphi$.		 
\end{corollary}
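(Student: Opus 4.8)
The plan is to extract the answer directly from Theorem~\ref{lemma42} and then carry out a single change-of-basis computation. By Theorem~\ref{lemma42}, the $n$-fold covering of fibers $\varphi$ corresponds to a $\mathbb{Z}\langle t\rangle$-submodule $L=H_B\leq H_A$ of index $n$, and $\tilde M=M_B$, where $B=A|L$ is the matrix of the $t$-action on $L$ (which acts as $A$), expressed in the basis $\{a_1,a_2\}=\{x^py^q,x^sy^r\}$. Writing $H_A$ additively as $\mathbb{Z}^2$ with standard basis $\{x,y\}$, the generators $a_1,a_2$ have coordinate columns $(p,q)$ and $(s,r)$, so the change-of-basis matrix is $P=\left(\begin{smallmatrix}p & s\\ q & r\end{smallmatrix}\right)$, whose determinant is $n$ precisely because $[H_A:L]=n$ by Theorem~\ref{lemma42}.

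Since $L$ is $t$-invariant, $A$ restricts to an endomorphism of $L$, and the matrix of this restriction in the basis $\{a_1,a_2\}$ is the conjugate $B=P^{-1}AP$. First I would record $P^{-1}=\frac1n\left(\begin{smallmatrix} r & -s\\ -q & p\end{smallmatrix}\right)$ (adjugate over determinant), using $\det P=n$. Then I would multiply out $B=\frac1n\left(\begin{smallmatrix} r & -s\\ -q & p\end{smallmatrix}\right)A\left(\begin{smallmatrix}p & s\\ q & r\end{smallmatrix}\right)$ by first computing $AP$, whose columns are $(p\alpha+q\beta,\,p\gamma+q\delta)$ and $(s\alpha+r\beta,\,s\gamma+r\delta)$, and then left-multiplying by the adjugate.

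The final step is to observe that each of the four resulting entries is exactly a $2\times2$ determinant divided by $n$. For instance, the $(1,1)$ entry $\frac1n\bigl(r(p\alpha+q\beta)-s(p\gamma+q\delta)\bigr)$ is $\frac1n\det\left(\begin{smallmatrix} p\alpha+q\beta & s\\ p\gamma+q\delta & r\end{smallmatrix}\right)$, while the two bottom entries come out in the form $p(\cdot)-q(\cdot)$ and so are determinants with $(p,q)$ in the first column; matching all four against the displayed matrix gives the statement verbatim.

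There is no genuine obstacle here, since the conclusion is precisely the conjugate $P^{-1}AP$ with its entries rewritten as determinants; the only thing to watch is the bookkeeping. I would be careful to keep the order of the basis $\{a_1,a_2\}$ consistent with the columns of $P$, and to conjugate as $P^{-1}AP$ rather than $PAP^{-1}$ (equivalently, that $t$ acts on coordinate columns on the left by $A$). Getting these conventions right is exactly what ensures that, e.g., $B_{2,1}$ appears as $\frac1n\det\left(\begin{smallmatrix} p & p\alpha+q\beta\\ q & p\gamma+q\delta\end{smallmatrix}\right)$ rather than with the opposite sign.
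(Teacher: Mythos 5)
Your proposal is correct and follows essentially the same route as the paper: both invoke Theorem~\ref{lemma42} (specifically the ``(1)$\Rightarrow$(2)'' step) to identify $B$ as the matrix of $A$ restricted to $L=H_B$ in the basis $\{x^py^q,\,x^sy^r\}$, and then perform the same linear algebra. The paper writes this as the linear systems $PB=AP$ solved entrywise (Cramer's rule), which is exactly your computation $B=P^{-1}AP$ with $P^{-1}$ expressed as adjugate over determinant.
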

\begin{proof}
 Write $a_1=x^py^q$, and $a_2=x^sy^r$. 
As in the proof of ``(1)$\Rightarrow$(2)'' in the
theorem, $B$ is the matrix $A$ written in terms of the basis $a_1,a_2$.
If
$B=\left(
  \begin{smallmatrix}
    a & b \\
    c & d\\
  \end{smallmatrix}
\right)
$, then $Aa_1=a_1^aa_2^c$, $Aa_2=a_1^ba_2^d$ translates into the
linear systems
$$
\begin{array}{rlrl}
pa+sc&=\ p\alpha+q\beta &\quad pb+sd&=\ s\alpha+r\beta\\
qa+rc&=\ p\gamma+q\delta &\quad qb+rd&=\ s\gamma+r\delta.\\
\end{array}
$$
Solving for $a,b,c,d$ gives $B$ the form of the statement.

\end{proof}

\begin{remark} 
\label{rem43}
For an integral matrix
$\left(
  \begin{smallmatrix}
    p & s \\
    q & r\\
  \end{smallmatrix}
\right)
$
with determinant~$n\geq2$, we have a function\ \ 
$\widetilde{\ \ }:GL(2,\mathbb{Z})\rightarrow GL(2,\mathbb{Z})$ such that
$A\mapsto \widetilde A$, where, 
if~$A=\left(
  \begin{smallmatrix}
    \alpha & \beta \\
    \gamma & \delta
  \end{smallmatrix}
\right)
$, 
then $\widetilde A$ is as the matrix $B$ in
Corollary~\ref{coro41}. Also if~$A\in SL(2,\mathbb{Z})$, then
$\widetilde A\in SL(2,\mathbb{Z})$. This function is multiplicative:~$\widetilde{A_1\cdot A_2}={\widetilde{A_1}}\cdot{\widetilde{A_2}}$, and
preserves inverses: $\widetilde{A^{-1}}={\widetilde{A}}^{-1}$. That is,
it is a group homomorphism. We do
not know if this last property make any geometrical sense in terms of
the covering spaces involved. 
\end{remark}

\begin{lemma}
\label{lemma44}
	Let $X,$ $Y$ and $Z$ path-connected topological spaces. 
If in the following pushout commutative diagram
$$
\begin{tikzcd}
X\arrow{r}{g} \arrow[swap]{d}{\varphi} & Z \arrow{d}{\psi}\\
Y\arrow[swap]{r}{h} & P\\
\end{tikzcd}
$$
the arrow $\varphi: X\to Y$ is a covering space and  the arrow $g:X\to Z$ is a
homeomorphism, then~$\psi: Z\to P$ is a covering space and $h:Y\to
P$  is a homeomorphism.

In particular $\varphi$ and $\psi$ have the same number of sheets.
\end{lemma}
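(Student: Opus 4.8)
The plan is to compute the pushout $P$ explicitly and to show that the copy of $Y$ sitting inside it is already all of $P$; once $h$ is identified as a homeomorphism, the covering property of $\psi$ will follow formally from the commutativity of the square. Recall that the pushout $P$ is the quotient $(Y\sqcup Z)/\!\sim$, where $\sim$ is the smallest equivalence relation with $\varphi(x)\sim g(x)$ for every $x\in X$, and that $h$ and $\psi$ are the maps induced by the inclusions $Y\hookrightarrow Y\sqcup Z$ and $Z\hookrightarrow Y\sqcup Z$. Since $Y$ is path-connected and $\varphi$ is a covering space, $\varphi$ is surjective; this is the fact that lets every point of $P$ be reached from $Y$.

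First I would analyze the equivalence classes of $\sim$. A point $g(x)\in Z$ is glued precisely to $\varphi(x)\in Y$; because $g$ is injective, $g(x)=g(x')$ forces $x=x'$, so no chain of identifications can pass from one point of $Y$ to a \emph{different} point of $Y$. Hence the class of $y\in Y$ is exactly $\{y\}\cup\{\,g(x):\varphi(x)=y\,\}$, and since $\varphi$ is surjective every class contains one and only one point of $Y$. This shows $h:Y\to P$ is a bijection, and it is continuous because $P$ carries the quotient topology.

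Next I would show $h$ is an open map. For $V\subseteq Y$ open, the preimage of $h(V)$ under the quotient map $Y\sqcup Z\to P$ has $Y$-part equal to $V$ and $Z$-part equal to $g(\varphi^{-1}(V))$. The latter is open because $\varphi$ is continuous and $g$ is a homeomorphism, so the preimage is open in $Y\sqcup Z$ and therefore $h(V)$ is open in $P$. Thus $h$ is a continuous open bijection, i.e.\ a homeomorphism. I expect this openness step to be the main (though modest) obstacle, since it is the point where one must check that gluing along $\varphi$ does not disturb the topology on the $Y$-side; the key inputs are exactly the injectivity and continuity of $g$ together with the continuity of $\varphi$.

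Finally, commutativity of the pushout square gives $\psi\circ g=h\circ\varphi$, and since $g$ is invertible this reads $\psi=h\circ\varphi\circ g^{-1}$. As a composite of the covering $\varphi$ with the homeomorphisms $g^{-1}\colon Z\to X$ and $h\colon Y\to P$, the map $\psi$ is again a covering space. Moreover the cardinality of a fiber is unchanged under pre- and post-composition with homeomorphisms, so $\psi$ and $\varphi$ have the same number of sheets, which is the final assertion.
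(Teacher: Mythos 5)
Your proof is correct and follows essentially the same route as the paper: both compute the pushout explicitly, identify each equivalence class as $\{y\}\cup g(\varphi^{-1}(y))$, and show $h$ is a continuous bijection that is a homeomorphism before transferring the covering structure. The only cosmetic difference is at the end, where you conclude via the identity $\psi=h\circ\varphi\circ g^{-1}$, while the paper re-verifies evenly covered neighborhoods directly by pushing them forward under $h$.
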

\begin{proof}
We may assume that
$$P=\displaystyle\frac{Y\sqcup Z}{g(x)\sim \varphi(x), \forall x\in X}$$
and that $\psi$ and $h$ are the corresponding inclusions
followed by the quotient~$\pi:Y\sqcup Z\rightarrow P$.

Note that, if~$w\in P$, then~$w=\{a\}\cup\{g(x):x\in \varphi^{-1}(a)\}$ for
some~$a\in Y$. Then $h$ and $\psi$ clearly are surjective, and $h$ is
one-to-one. 

For~$U\subset P$, if~$h^{-1}(U)$ is open, since the square above commutes and~$g$ is a 
homeomorphism, it follows that $\psi^{-1}(U)$ also is open; 
therefore $\pi^{-1}(U)\cap Y$ and $\pi^{-1}(U)\cap Z$ are open in $Y$ and $Z$,
respectively. It follows that $\pi^{-1}(U)$ is open in~$Y\sqcup Z$, and,
therefore,~$U$ is open in $P$. Thus $h$ is a homeomorphism. 

Now for
$W\subset P$, if
$\psi^{-1}(W)$ is open, since~$\varphi$ is 
an identification, it follows that~$h^{-1}(W)$ is open, and thus $W$ is open
in $P$. That is,~$\psi$ is an
identification. 

If $w\in P$, 
then there is a fundamental neighborhood $V\subset Y$ of~$h^{-1}(p)$ for
the covering $\varphi$. Then~$h(V)$ is a fundamental neighborhood of
$p$ for~$\psi$. Thus $\psi$ is a covering space.

\end{proof}

\begin{proposition}
\label{prop45}
Let $\varphi:M_B\rightarrow M_A$ be an $n$-fold  covering of
fibers of torus bundles, where $B=\widetilde A$ as in Remark~\ref{rem43}.

For any matrix $D$ such that $B$ is conjugate to $D$ in $GL(2,\mathbb{Z})$, there exists an
$n$-fold covering  
of fibers $\psi:M_D\rightarrow M_C$ with $D=\widetilde C$ as in
Remark~\ref{rem43}, and $M_C\cong M_A$. 
\end{proposition}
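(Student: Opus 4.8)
The plan is to convert the statement into bookkeeping inside $GL(2,\mathbb{Z})$, tracking determinants carefully. Recall from the computation behind Corollary~\ref{coro41} that the assertion $B=\widetilde A$ unwinds as follows: the covering of fibers $\varphi$ corresponds to an $A$-invariant index-$n$ submodule $H_B=\langle x^py^q,x^sy^r\rangle\le H_A$, and if $Q=\left(\begin{smallmatrix}p&s\\q&r\end{smallmatrix}\right)$ denotes the integral matrix whose columns are these generators (so $\det Q=n$ by Remark~\ref{rem43}), then ``$B$ is $A$ written in the basis $\{a_1,a_2\}$'' means precisely $AQ=QB$, i.e. $B=Q^{-1}AQ$. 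Conversely, any integral $Q$ with $\det Q=n$ for which $Q^{-1}AQ$ is again integral has $Q$-columns spanning an $A$-invariant submodule, so by Theorem~\ref{lemma42} it realizes $Q^{-1}AQ=\widetilde A$ as an $n$-fold covering of fibers of $M_A$. This dictionary is the whole content of the proposition.

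Now I would substitute. Writing $D=PBP^{-1}$ with $P\in GL(2,\mathbb{Z})$ and plugging in $B=Q^{-1}AQ$ gives $D=(QP^{-1})^{-1}A(QP^{-1})$. Set $\hat Q=QP^{-1}$, so that $\det\hat Q=n\det P=\pm n$ and $D=\hat Q^{-1}A\hat Q$. If $\det P=1$ then $\det\hat Q=n$, and I take $C=A$ and $Q'=\hat Q$; since $CQ'=Q'D$ with $D$ integral, the columns of $Q'$ span an $A$-invariant index-$n$ submodule, and Theorem~\ref{lemma42} produces the $n$-fold covering of fibers $\psi:M_D\to M_A=M_C$ with $D=\widetilde C$, while trivially $M_C=M_A$.

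The only delicate point, and the main obstacle, is the sign: when $\det P=-1$ one gets $\det\hat Q=-n$, which is forbidden, since the construction $\widetilde{\cdot}$ of Remark~\ref{rem43} is defined only for determinant $n>0$. The fix is to reroute this determinant $-1$ through a conjugation of the base monodromy rather than leaving it inside the covering data. I would set $R=\left(\begin{smallmatrix}-1&0\\0&1\end{smallmatrix}\right)\in GL(2,\mathbb{Z})$ and $Q'=R\hat Q$, so $\det Q'=(-1)(-n)=n$; then from $D=\hat Q^{-1}A\hat Q=(R^{-1}Q')^{-1}A(R^{-1}Q')=(Q')^{-1}(RAR^{-1})Q'$ I read off $C=RAR^{-1}$. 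This $C$ is $GL(2,\mathbb{Z})$-conjugate to $A$, hence $M_C\cong M_A$ (conjugate monodromies yield fiber-preservingly homeomorphic torus bundles), and $D=(Q')^{-1}CQ'$ with $\det Q'=n$ and $CQ'=Q'D$ integral. Thus the columns of $Q'$ span a $\mathbb{Z}\langle t\rangle$-submodule of $H_C$ of index $n$, and Theorem~\ref{lemma42}, together with Corollary~\ref{coro41} and Remark~\ref{rem43}, delivers the desired $n$-fold covering of fibers $\psi:M_D\to M_C$ with $D=\widetilde C$, completing the argument.
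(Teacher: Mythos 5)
Your proof is correct, but it takes a genuinely different route from the paper's. The paper argues topologically: from $D=gBg^{-1}$ it builds a fiber-preserving homeomorphism $g:M_B\rightarrow M_D$, forms the pushout of this homeomorphism with the covering $\varphi:M_B\rightarrow M_A$, and invokes Lemma~\ref{lemma44} to conclude that the pushout consists of a covering $\psi:M_D\rightarrow M$ together with a homeomorphism $h:M_A\rightarrow M$; setting $T_C=h(T_A)$ and $C=hAh^{-1}$, the intertwining relation $\psi D=C\psi$ then shows, via Theorem~\ref{lemma42}, that $\psi$ is a covering of fibers with $D=\widetilde C$. You never form a pushout: you observe that $\widetilde A=Q^{-1}AQ$ for $Q=\left(\begin{smallmatrix}p&s\\q&r\end{smallmatrix}\right)$ (which is indeed exactly what the four determinant entries in Corollary~\ref{coro41} compute), so the whole statement collapses to the matrix identity $D=(QP^{-1})^{-1}A(QP^{-1})$, to which Theorem~\ref{lemma42} applies as soon as the conjugating matrix has positive determinant. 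Each route buys something. The paper's pushout argument is coordinate-free, produces the covering map $\psi$ directly, and its mechanism (Lemma~\ref{lemma44}) is not special to torus bundles; on the other hand, its concluding claim that $D=\widetilde C$ ``with $p,q,s,r$ given by the subgroup $H_D\leq H_C$'' silently requires that the natural basis of $H_D$ (the $\psi_\#$-image of the basis in which $D$ is written) have determinant $+n$ rather than $-n$, which is exactly the delicate point when $\det g=-1$. Your version confronts this sign head-on: when $\det P=-1$ you push the reflection $R$ into the base monodromy, replacing $A$ by $C=RAR^{-1}$, which is legitimate because $GL(2,\mathbb{Z})$-conjugate monodromies give fiber-preservingly homeomorphic bundles; this is the implicit basis adjustment the paper leaves to the reader, and it also explains why the proposition can only assert $M_C\cong M_A$ rather than $C=A$. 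The cost of your approach is its reliance on the explicit formula of Corollary~\ref{coro41}, but as bookkeeping it is complete and correct.
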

\begin{proof}
Assume that $D=gBg^{-1}$. Write $T_B$ for the fiber of $M_B$. Then we have
a commutative diagram
$$
\begin{tikzcd}
T_D\arrow{r}{D} \arrow[swap]{d}{g} & T_D \arrow{d}{g}\\
T_B\arrow[swap]{r}{B} & T_B\\
\end{tikzcd}
$$
where $T_D$ is a torus ($T_D=g^{-1}(T_B)$). This gives a fiber preserving homeomorphism
$g:M_B\rightarrow M_D$. Taking the pushout of $\varphi$ and $g$, we
obtain $h:M_A\rightarrow M$ a homeomorphism and $\psi:M_D\rightarrow M$
a covering space with $\psi g=h\varphi$ as in
Lemma~\ref{lemma44}. Write  $T_A$ for the fiber of
$M_A$,~$T_C=h(T_A)$, and $C=hAh^{-1}:T_C\rightarrow T_C$. Then
$M=M_C$. 
Since $\psi D=\psi gBg^{-1}=h\varphi Bg^{-1}=hA\varphi g^{-1}=C\psi$,
we have that $\psi$ is a covering of fibers by Theorem~\ref{lemma42},
and $D=\widetilde C$, 
with $p,q,s,r$ given 
by the subgroup $H_D\leq H_C$.

\end{proof}

\subsection{Cyclic coverings of the torus}
\label{sec41}
An $n$-fold covering space $\eta:X\rightarrow Y$ is called
\emph{cyclic} if the associated representation
$\omega_\eta:\pi_1(Y)\rightarrow S_n$ has image $\omega(\pi_1(Y))\cong
\mathbb{Z}_n$, a
cyclic group. Write $\varepsilon_n=(1,2,\dots,n)\in S_n$ for the standard
$n$-cycle.

For a torus $T$ with $\pi_1(T)=\langle
a,b:[a,b]\rangle$, a cyclic covering~$\eta:\tilde T\rightarrow T$ with $\tilde
T$ connected,
has associated representation $\zeta_\eta:\pi_1(T)\rightarrow S_n$
such that $a\mapsto \varepsilon_n^\sigma$, $b\mapsto
\varepsilon_n^\tau$ with, say, $(n,\sigma)=1$. Now $\zeta_\eta$ is conjugate
to $\omega_\rho:\pi_1(T)\rightarrow S_n$
such that $a\mapsto \varepsilon_n$, $b\mapsto
\varepsilon_n^\rho$ for some integer $\rho$. The covering space
equivalence class of $\eta$, has a unique representative~$\eta_\rho$
with associated representation $\omega_\rho$ (for uniqueness we
assume that, say, $\rho\in\{0,\dots,n-1\}$).

In the one-to-one correspondence between coverings of $T$ and
subgroups of $\pi_1(T)$,  we have that
$\eta_\rho$ corresponds to 
the subgroup
$(\eta_\rho)_\#(\pi_1(\tilde T))=\langle a^n,a^{-\rho}b\rangle$
(see~\cite{trefoil}, Lemma in p.5).

\subsection{Non-cyclic coverings of the torus}
For positive integers $m,n,d$, and $i_0$ such that $m$ divides $n$,
$dm$ divides $n$, and
$(d,i_0)=1$ with $0\leq i_0\leq d-1$, write
$\rho=i_0n/d$. 
%
We construct
a 
transitive representation $\omega(m,n,d,\rho)$ of
$\pi_1(T)=\langle a,b:[a,b]\rangle$ into~$S_{mn}$ with image isomorphic to
$\mathbb{Z}_m\oplus\mathbb{Z}_n$. 

For $j=0,\dots,m-1$, define
\begin{equation}
\label{eq1}
\sigma_{j+1}=(jn+1,jn+2,\dots,jn+n).
\end{equation}
Then $\sigma=\sigma_1\cdots\sigma_m$ is a product of $m$ cycles of order $n$.

If $d=1$, then $i_0=0$, and $\rho=0$. For $j=1,\dots,n$, define
$$\tau_{j}=(j,n+j,2n+j,\dots,(m-1)n+j).$$
Then $\tau=\tau_1\cdots\tau_n$ is a
product of $n$ cycles of order $m$, and
$\omega:\pi_1(T)
\rightarrow S_{mn}$ given by 
$a\mapsto \sigma$, and $b\mapsto\tau$, is a
representation~$\omega(m,n,1,0)$ as required.

If $d>1$, then $i_0>0$, and $\rho>0$. Define
$$\tau_{k,j}=r_k+(j-1)n$$
where $r_k$ is the number $(k-1)\rho+1$ reduced $\pmod n$, and $1\leq j\leq
m$, and $1\leq k\leq d$. 
Then
$\tau_1=(\tau_{1,1},\tau_{1,2},\dots,\tau_{d,m})$ is a cycle of
order~$dm$.

Now, for $1\leq\ell\leq n/d-1$,
define
$$\tau_{\ell+1}=(\tau_{1,1}+\ell,\tau_{1,2}+\ell,\dots,\tau_{d,m}+\ell).$$
Then
$\tau=\tau_1\tau_2\cdots\tau_{n/d}$
is a product of $n/d$ cycles of order $dm$, and
$\omega:\pi_1(T)
\rightarrow S_{mn}$ given by 
$a\mapsto \sigma$, and $b\mapsto\tau$, is a
representation~$\omega(m,n,d,\rho)$ as required. 

For example, for $m=2,n=8,d=4,i_0=1$, and $\rho=2$, 
the representation
$\omega(2,8,4,2):\pi_1(T)
\rightarrow S_{16}$ is 
given by
$$a\mapsto (1,2,3,4,5,6,7,8)(9,10,11,12,13,14,15,16)$$
$$b\mapsto  (1,9,3,11,5,13,7,15)(2,10,4,12,6,14,8,16).$$

\begin{lemma}
\label{lemma46p}
If $\omega:\pi_1(T)
\rightarrow S_{k}$ is a
transitive representation with image isomorphic to
$\mathbb{Z}_m\oplus\mathbb{Z}_n$, and with $m$ a submultiple of $n$, then
there are non-negative integers $d$ and~$\rho$ such that $\omega$ is
conjugate to $\omega(m,n,d,\rho)$.
\end{lemma}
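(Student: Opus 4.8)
The plan is to reduce the classification to the kernel lattice $N=\ker\omega\le\pi_1(T)=\langle a,b\rangle\cong\mathbb{Z}^2$ and to match it against the explicit construction. First I would record that, since the image $G=\omega(\pi_1(T))$ is abelian and transitive on the $k$ symbols, $G$ acts regularly; hence every point-stabilizer in $G$ is trivial, $k=|G|=mn$, and the stabilizer in $\pi_1(T)$ of each symbol equals $N$. As $\pi_1(T)$ is abelian, the associated $\pi_1(T)$-set is transitive with stabilizer $N$ at every point, so two such representations $\omega_1,\omega_2$ are conjugate in $S_{mn}$ precisely when they are isomorphic as $\pi_1(T)$-sets, i.e.\ precisely when $\ker\omega_1=\ker\omega_2$. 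Thus the conjugacy class of $\omega$ is completely determined by the sublattice $N\le\mathbb{Z}^2$, which has index $mn$ and quotient $\mathbb{Z}^2/N\cong\mathbb{Z}_m\oplus\mathbb{Z}_n$.

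Next I would pin down $N$ in Hermite normal form. Writing $\sigma=\omega(a)$, the cyclic group $\langle\sigma\rangle$ acts semiregularly, so $\sigma$ is a product of $mn/\mathrm{ord}(\sigma)$ disjoint cycles of length $\mathrm{ord}(\sigma)$; using that $\sigma$ realizes the maximal order $n=\exp(G)$ (an element of maximal order generates a direct summand of $G$, whose complement is forced to be $\mathbb{Z}_m$), one gets $N\cap\langle a\rangle=\langle a^{n}\rangle$ and $G/\langle\sigma\rangle\cong\mathbb{Z}_m$. A Hermite-normal-form computation then shows $N=\langle a^{n},\,a^{-\rho}b^{m}\rangle$ for a unique $\rho\in\{0,\dots,n-1\}$, and the requirement $\mathbb{Z}^2/N\cong\mathbb{Z}_m\oplus\mathbb{Z}_n$ (rather than a quotient with a larger cyclic factor) is equivalent to $\gcd(n,\rho,m)=m$, i.e.\ $m\mid\rho$; writing $d$ for the order of $\rho$ in $\mathbb{Z}_n$, this is exactly $\rho=i_0\,n/d$ with $dm\mid n$ and $\gcd(i_0,d)=1$. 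This is the non-cyclic analogue of the subgroup $\langle a^{n},a^{-\rho}b\rangle$ found in Section~\ref{sec41}.

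Finally I would compute $\ker\omega(m,n,d,\rho)$ straight from the definition: labelling the symbol $(j-1)n+s$ by $(j-1,s-1)\in\mathbb{Z}_m\times\mathbb{Z}_n$, the cycle $\sigma$ of \eqref{eq1} acts as $+(0,1)$, while $\tau$ cyclically permutes the $m$ blocks (a single $m$-cycle on blocks, as forced by $G/\langle\sigma\rangle\cong\mathbb{Z}_m$) and accumulates a net intra-block shift of $\rho$ after one full loop; hence $\tau^{m}=\sigma^{\rho}$ and $\ker\omega(m,n,d,\rho)=\langle a^{n},a^{-\rho}b^{m}\rangle$. Since this equals the lattice $N$ obtained above, the first step yields that $\omega$ is conjugate to $\omega(m,n,d,\rho)$.

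The step I expect to be the real obstacle is the middle one: justifying that $\sigma=\omega(a)$ may be taken to be the standard product of $m$ disjoint $n$-cycles (equivalently, that $\omega(a)$ has order the exponent $n$, so that $\langle\sigma\rangle$ is a direct summand and $\tau$ induces a single $m$-cycle on the $\sigma$-blocks), and then running the bookkeeping that converts the single residue $\rho$ into the constrained data $(d,i_0,\rho)$ with $dm\mid n$ and $\gcd(i_0,d)=1$. Once $\sigma$ is normalized, the only remaining freedom is conjugation by the centralizer of $\sigma$ in $S_{mn}$, a wreath product $\mathbb{Z}_n\wr S_m$; checking that this freedom suffices to bring the commuting, transitivity-completing permutation $\tau$ to the exact form in the construction — and that the surviving invariant is precisely $\rho$ — is where the care is needed, and it is cleanly packaged by the kernel-equality criterion of the first paragraph.
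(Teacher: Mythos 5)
Your outer framework is sound, and it is genuinely different from the paper's argument: the paper proves conjugacy by explicitly building a conjugating permutation $v$ (sending $\omega(b)^j(1)$ to $1+jn$ and carrying each $n$-cycle of $\omega(a)$ to the standard cycle $\sigma_j$ of Equation~\ref{eq1}), whereas you invoke the criterion that two transitive representations of $\pi_1(T)$ with abelian image are regular and are conjugate in $S_{mn}$ exactly when their kernels coincide, and then compute $\ker\omega(m,n,d,\rho)=\langle a^n,a^{-\rho}b^m\rangle$ from the identity $\tau^m=\sigma^\rho$. Your first and third paragraphs are correct, and this kernel-equality mechanism packages the combinatorial bookkeeping more cleanly than the paper's explicit conjugator.

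However, the proposal has a genuine gap, and it is exactly the step you defer as ``the real obstacle'': you never prove that $\sigma=\omega(a)$ has order $n$, equivalently that $N$ admits a Hermite basis of the form $a^n,\,a^{-\rho}b^m$. The parenthetical you offer (an element of maximal order spans a $\mathbb{Z}_n$ direct summand whose complement is $\mathbb{Z}_m$) justifies a \emph{consequence} of that claim, not the claim itself. The paper is no more careful here --- it simply writes ``we see that the order of, say, $\omega(a)$ is $n$'' --- and in fact this step cannot be closed in the stated generality. Take $m=1$, $n=6$, and let $\omega(a)=(1\,4)(2\,5)(3\,6)$, $\omega(b)=(1\,3\,5)(2\,4\,6)$, i.e.\ the regular action of $\mathbb{Z}_6$ on itself with $a$ acting as $+3$ and $b$ as $+2$. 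Since $\gcd(2,3)=1$, this $\omega$ is transitive with image $\mathbb{Z}_1\oplus\mathbb{Z}_6$; yet $\omega(a)$ has order $2$ while every $\omega(1,6,d,\rho)$ sends $a$ to a $6$-cycle, so $\omega$ is conjugate to no $\omega(1,6,d,\rho)$. In your language, $\ker\omega=\langle a^2,b^3\rangle$, which is not of the form $\langle a^6,a^{-\rho}b\rangle$ for any $\rho$, since no element of $\ker\omega$ has $b$-exponent $1$. So the obstacle you flagged is real and is not a matter of more careful normalization inside the centralizer of $\sigma$: a complete argument must either add the hypothesis that $\omega(a)$ has order $n$ (which is effectively how Lemma~\ref{lemma48} is phrased, and which both your Hermite computation and the paper's conjugation silently use), or weaken the conclusion to conjugacy of $\omega\circ\theta$ for some automorphism $\theta$ of $\pi_1(T)$ --- with that freedom your kernel criterion plus the Smith normal form of $N$ finishes immediately (indeed with $d=1$, $\rho=0$). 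As written, your proof attempt, like the paper's, is incomplete at precisely this point.
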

\begin{proof}
Since $\omega$ is 
transitive, then $\omega$ is regular
and $k=mn$. Since $m$ divides $n$, we see that the order of, say,
$\omega(a)$ is $n$, and that $m$ divides the order of~$\omega(b)$.
We obtain
$order(\omega(b))=dm$ for some $d$. Also, since~$x^n=1$ for al
$x\in Image(\omega)$, then $dm$ divides~$n$. 

For $j=0,\dots,m-1$ write $O_{j+1}$ for the orbit of $\omega(a)$ that
contains~$\omega(b)^j(1)$, and $\gamma_{j+1}$ for the corresponding
$n$-cycle of $\omega(a)$ which acts on $O_{j+1}$ in the disjoint cycle
decomposition $\omega(a)=\gamma_1\cdots\gamma_m$.  For $j=1,\dots,m$, take
$v\in S_{mn}$ 
such that 
$v(\omega(b)^j(1))=1+jn$, and
$v\gamma_jv^{-1}=\sigma_j$ with $\sigma_j$ as 
in Equation~\ref{eq1} above; this is possible for, the orbits of the
$\gamma_j$'s are disjoint. Then, if
$\rho+1=(v\cdot\omega(b)\cdot v^{-1})^{m-1}(1)$, we have that $\omega$ is
conjugate to~$\omega(m,n,d,\rho)$; that is, $v\cdot\omega\cdot
v^{-1}=\omega(m,n,d,\rho)$. 

\end{proof}

\begin{remark}
Notice that the $n$-fold cyclic covering of the torus $\eta_\rho$ as in
Section~\ref{sec41}, can be regarded as the associated covering of
$\omega(1,n,d,\rho)$ with 
$d=order(\varepsilon_n^\rho)=n/(n,\rho)$. 
\end{remark}

\begin{lemma}
\label{lemma48}
Let~$\eta:\tilde T\rightarrow T$ be an 
$mn$-fold non-cyclic covering space of the torus, $T$, with $m$ a submultiple of
$n$ and $\tilde T$ connected. Let~$\omega:\pi_1(T)\rightarrow S_{mn}$ be the
representation associated to~$\eta$. Then, as in
Lemma~\ref{lemma46p}, $\omega$ is 
conjugate to~$\omega(m,n,d,\rho)$ for some integers $d$ and~$\rho$.

If, say, 
$\omega(a)$ has order $n$, then for
any integer $r$ such that $r\equiv\rho\mod n$, there is a basis
$\tilde a$, $\tilde b$ of $\pi_1(\tilde T)$ such that $\eta_\#(\tilde
a)= a^n$, and $\eta_\#(\tilde b)= a^{-r}b^m$.
\end{lemma}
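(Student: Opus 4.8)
The plan is to identify the subgroup $\eta_\#(\pi_1(\tilde T))\le\pi_1(T)$ explicitly and to exhibit $a^n$ and $a^{-r}b^m$ as a basis for it. Since $\pi_1(T)\cong\mathbb Z^2$ is free abelian and $\eta_\#$ is injective, producing the desired basis $\tilde a,\tilde b$ of $\pi_1(\tilde T)$ is the same as showing that $a^n$ and $a^{-r}b^m$ form a basis of $\eta_\#(\pi_1(\tilde T))$. Because $Image(\omega)\cong\mathbb Z_m\oplus\mathbb Z_n$ is abelian the action is regular (as already observed in the proof of Lemma~\ref{lemma46p}), so point stabilizers are trivial and $\eta_\#(\pi_1(\tilde T))=\omega^{-1}(St(1))=\ker\omega$. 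As conjugate representations have equal kernels, I may take $\omega=\omega(m,n,d,\rho)$ in standard form and write $\sigma=\omega(a)$, $\tau=\omega(b)$ as in Equation~\ref{eq1} and the construction following it.

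First I would verify the two inclusions. Since $\sigma$ has order $n$ by hypothesis, $\omega(a^n)=\sigma^n=1$. For the second generator, $\sigma^n=1$ and $r\equiv\rho\pmod n$ give $\omega(a^{-r}b^m)=\sigma^{-r}\tau^m=\sigma^{-\rho}\tau^m$, so the entire statement reduces to the single identity $\tau^m=\sigma^\rho$. When $d=1$ this says $\tau^m=1=\sigma^0$, which is immediate because each cyclic factor of $\tau$ has order $m$.

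The identity $\tau^m=\sigma^\rho$ is the crux, and I would establish it by analyzing the disjoint-cycle structure in ``block--position'' coordinates: write each symbol $jn+i$ (with $0\le j\le m-1$ and $1\le i\le n$) as a pair $(j,i)$, so that $\sigma$ acts by $(j,i)\mapsto(j,\,i+1\bmod n)$ and $\sigma^\rho$ shifts the position by $\rho$ while fixing the block. Tracing the cycle $\tau_1=(\tau_{1,1},\dots,\tau_{d,m})$ and its translates $\tau_{\ell+1}$, one sees that $\tau$ advances the block index by one (cyclically through the $m$ blocks) and, each time it wraps from block $m-1$ back to block $0$, advances the position from $r_k$ to $r_{k+1}$; since $r_{k+1}\equiv r_k+\rho\pmod n$, applying $\tau$ exactly $m$ times returns every symbol to its original block while shifting its position by $\rho$. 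Hence $\tau^m$ acts precisely as $\sigma^\rho$ on each cycle, and $\tau^m=\sigma^\rho$. This cycle bookkeeping is the main obstacle; everything else is formal.

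Finally I would compare indices. The subgroup $\langle a^n,a^{-r}b^m\rangle$ is generated by the vectors $(n,0)$ and $(-r,m)$ of $\pi_1(T)=\mathbb Z^2$, and $\det\left(\begin{smallmatrix} n&-r\\0&m\end{smallmatrix}\right)=nm\neq0$, so these two elements form a basis of the rank-two subgroup they generate, which therefore has index $nm$ in $\pi_1(T)$. On the other hand $[\pi_1(T):\ker\omega]=|Image(\omega)|=mn$. Since $\langle a^n,a^{-r}b^m\rangle\subseteq\ker\omega$ and both have the same finite index in $\pi_1(T)$, they coincide; thus $a^n,a^{-r}b^m$ is a basis of $\ker\omega=\eta_\#(\pi_1(\tilde T))$, and its preimage under the isomorphism $\eta_\#$ onto its image furnishes the required basis $\tilde a,\tilde b$. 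The same computation explains why any $r\equiv\rho\pmod n$ works: replacing $\rho$ by $r$ multiplies $a^{-\rho}b^m$ by $a^{\rho-r}\in\langle a^n\rangle$, which does not change the generated subgroup.
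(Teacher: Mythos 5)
Your proof is correct and complete. It is worth noting how it sits relative to the paper: the paper's entire proof of this lemma is a one-line deferral --- ``the proof goes as the proof in \cite{trefoil}, Lemma in p.~5'' --- i.e., an appeal to the analogous statement for cyclic coverings, where the representation is $a\mapsto\varepsilon_n$, $b\mapsto\varepsilon_n^\rho$ and the corresponding subgroup is $\langle a^n,a^{-\rho}b\rangle$. What you wrote is exactly the generalization that this deferral leaves implicit, carried out in full: (i) transitivity together with the abelian image forces regularity, so $\eta_\#(\pi_1(\tilde T))=\omega^{-1}(St(1))=\ker\omega$, which is unchanged under conjugation of $\omega$ and lets you pass to the standard model $\omega(m,n,d,\rho)$; (ii) the one point genuinely specific to the non-cyclic case is the permutation identity $\tau^m=\sigma^\rho$, which your block/position bookkeeping establishes correctly --- each application of $\tau$ advances the block cyclically and the position jumps by $\rho$ exactly once per $m$ steps, at the wrap (one can sanity-check against the paper's example $\omega(2,8,4,2)$, where indeed $\tau^2=\sigma^2$); in the cyclic case this step degenerates to the triviality $\varepsilon_n^{-\rho}\varepsilon_n^{\rho}=1$; (iii) the determinant count $[\pi_1(T):\langle a^n,a^{-r}b^m\rangle]=nm=\#\,Image(\omega)=[\pi_1(T):\ker\omega]$ upgrades the inclusion to an equality and simultaneously certifies that the two elements form a basis, which then pulls back through the injective $\eta_\#$, and it also explains the freedom $r\equiv\rho\pmod n$. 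So your route is in substance the one the paper intends, but self-contained; its added value is isolating $\tau^m=\sigma^\rho$ as the combinatorial heart of the non-cyclic case, which the paper's citation hides.
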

\begin{proof}
The proof goes as the proof in~\cite{trefoil},  Lemma in p.~5.
\end{proof}

\begin{corollary}
\label{coro48}
Let $\eta:\tilde T\rightarrow T$ be an 
$mn$-fold covering space with $m$ a submultiple of $n$ such that the
image of its associated representation is isomorphic
to~$\mathbb{Z}_m\oplus\mathbb{Z}_n$ (we  
allow $m=1$).

There is an integer $\rho$ such that $m$ divides $\rho$,
and such that 
the subgroup of 
$\pi_1(T)$ corresponding to $\eta$ is $\eta_\#(\pi_1(\tilde T))=\langle
a^n,a^{-\rho}b^m\rangle$. 
\end{corollary}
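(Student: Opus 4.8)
The plan is to deduce Corollary~\ref{coro48} directly from Lemma~\ref{lemma48}, which has already established almost everything needed. The corollary asserts the existence of a specific basis for the subgroup $\eta_\#(\pi_1(\tilde T))\leq\pi_1(T)$; the lemma, by contrast, produces an explicit basis $\tilde a,\tilde b$ of $\pi_1(\tilde T)$ together with the values of $\eta_\#$ on it. So the whole task reduces to translating the statement of the lemma into the statement of the corollary and handling the single extra divisibility claim, namely that the integer $\rho$ can be chosen with $m\mid\rho$.

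First I would invoke Lemma~\ref{lemma46p} (or the lemma's opening sentence) to write $\omega$ as conjugate to $\omega(m,n,d,\rho_0)$ for some $d,\rho_0$, so that the standard generators satisfy $\omega(a)$ of order $n$ and $\omega(b)$ of order $dm$. Then I would apply Lemma~\ref{lemma48}: for \emph{any} integer $r$ with $r\equiv\rho_0\pmod n$ there is a basis $\tilde a,\tilde b$ of $\pi_1(\tilde T)$ with $\eta_\#(\tilde a)=a^n$ and $\eta_\#(\tilde b)=a^{-r}b^m$. Hence the subgroup is $\eta_\#(\pi_1(\tilde T))=\langle a^n, a^{-r}b^m\rangle$ for every such $r$. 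The freedom in choosing $r$ within its residue class mod $n$ is the lever that delivers the corollary.

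The one substantive point — and the step I expect to require the most care — is arranging $m\mid\rho$ while keeping the generating set in the required form $\langle a^n, a^{-\rho}b^m\rangle$. The construction of $\omega(m,n,d,\rho)$ sets $\rho=i_0 n/d$ with $dm\mid n$, so $\rho$ is already a multiple of $n/d$, and since $dm\mid n$ we have $m\mid n/d$, giving $m\mid\rho$ automatically for the canonical representative. For a general conjugate representation, however, $\rho_0$ from Lemma~\ref{lemma48} need only be fixed modulo $n$. Because $m\mid n$ (as $m$ is a submultiple of $n$), I can replace $r$ by any element of the class $\rho_0+n\mathbb{Z}$, and every such $r$ is congruent to $\rho_0$ modulo $m$ as well; thus I should verify that the canonical value $\rho=i_0n/d$ satisfies $m\mid\rho$ and then \emph{set} $\rho:=$ that canonical value, which lies in the correct class mod $n$, so that Lemma~\ref{lemma48} applied with $r=\rho$ yields $\eta_\#(\pi_1(\tilde T))=\langle a^n, a^{-\rho}b^m\rangle$ with $m\mid\rho$, exactly as claimed.

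Finally I would note the degenerate case $m=1$ explicitly: there $a^{-\rho}b^m=a^{-\rho}b$ and $m\mid\rho$ is vacuous, recovering the cyclic description $\langle a^n,a^{-\rho}b\rangle$ from Section~\ref{sec41} and confirming consistency with the Remark following Lemma~\ref{lemma46p}. Since all the topology (connectivity of $\tilde T$, the correspondence between coverings and subgroups) is already packaged inside Lemmas~\ref{lemma46p} and~\ref{lemma48}, the corollary's proof is essentially a bookkeeping argument about representatives of $\rho$ modulo $n$ together with the elementary divisibility $m\mid n/d\mid\rho$, and no further geometric input is needed.
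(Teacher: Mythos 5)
Your proof is correct and is essentially the derivation the paper leaves implicit: Corollary~\ref{coro48} is stated without proof, being immediate from Lemma~\ref{lemma48} (applied with $r$ equal to the canonical $\rho=i_0n/d$) together with the observation that $dm\mid n$ forces $m\mid n/d$ and hence $m\mid\rho$. Your separate treatment of the cyclic case $m=1$ via Section~\ref{sec41} likewise matches the paper's remark identifying $\eta_\rho$ with the covering associated to $\omega(1,n,d,\rho)$.
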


\subsection{Coverings of torus bundles that lower the genus}
Let $\eta:\tilde T\rightarrow T$ be a finite-sheeted covering space of the
torus $T$ with $\tilde T$ connected. We say that $\eta$ extends to a covering
of torus bundles if there is a covering of fibers between torus bundles,
$\varphi:\tilde M\rightarrow M$, such that $\varphi$ restricted to the fiber
of $\tilde M$ equals $\eta$.

\begin{lemma}
\label{lemma46}
Let $\eta:\tilde T\rightarrow T$ be a covering space of the
torus $T$ with associated representation $\omega(m,n,d,\rho)$, $m\geq1$. Write 
$\rho=i_0n/d$.

Then $\eta$ extends to a covering of torus bundles $\tilde M\rightarrow M_A$
if and only if there are integers $p,k,r$, and $s$ such that
{\arraycolsep=7.8pt\def\arraystretch{1.4}
$$A=
\left(
\begin{array}{cc}

p-i_0\frac{n}{dm}k 
& \frac{n}{m} r-i_0\frac{n}{dm} s+i_0\frac{n}{dm}(p-i_0\frac{n}{dm} k) \\
 k & s+i_0\frac{n}{dm}k\\
\end{array}
\right)
$$
}
%
%

\end{lemma}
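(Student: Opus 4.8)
The plan is to convert the topological extension question into a single arithmetic condition on the monodromy matrix, using the two structural results already available. Write $\pi_1(T)=\langle a,b:[a,b]\rangle$ and identify $H_A=\pi_1(T)$ with $\mathbb{Z}^2$ written additively, so that $a=(1,0)$, $b=(0,1)$, and the $\mathbb{Z}\langle t\rangle$-action is left multiplication by $A=\left(\begin{smallmatrix}\alpha&\beta\\\gamma&\delta\end{smallmatrix}\right)$. Since $\eta$ has associated representation $\omega(m,n,d,\rho)$, Corollary~\ref{coro48} (whose proof is Lemma~\ref{lemma48}) identifies the subgroup it determines as $L:=\eta_\#(\pi_1(\tilde T))=\langle a^n,\,a^{-\rho}b^m\rangle$ with $m\mid\rho$. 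Thus $L$ is the finite-index sublattice of $\mathbb{Z}^2$ whose generators are the columns of $P=\left(\begin{smallmatrix}n&-\rho\\0&m\end{smallmatrix}\right)$, of index $mn$.

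Next I would apply the equivalence of (1) and (3) in Theorem~\ref{lemma42} to the submodule $L$: the covering $\eta$ extends to a covering of fibers $\tilde M\to M_A$ precisely when $L$ is a $\mathbb{Z}\langle t\rangle$-submodule of $H_A$. Because $t$ acts as $A$, because $L$ has finite index, and because $A$ is a lattice automorphism, this submodule condition is equivalent to $A(L)\subseteq L$ (closure under $t^{-1}$ is then automatic, since $A(L)\subseteq L$ forces $A(L)=L$ by comparing indices). In matrix language this says exactly that $B:=P^{-1}AP$ is an integer matrix.

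The technical heart is then the explicit computation of $B=P^{-1}AP$. Set $\lambda=\rho/m=i_0\tfrac{n}{dm}$ and $\mu=\tfrac{n}{m}$; both are integers, since $dm\mid n$ makes $\tfrac{n}{dm}\in\mathbb{Z}$ and $m\mid n$. A direct calculation gives $B=\left(\begin{smallmatrix}\alpha+\lambda\gamma & \tfrac{1}{\mu}(\beta-\lambda(\alpha-\delta)-\lambda^2\gamma)\\ \mu\gamma & \delta-\lambda\gamma\end{smallmatrix}\right)$. The entries $B_{11}=\alpha+\lambda\gamma$, $B_{21}=\mu\gamma$, $B_{22}=\delta-\lambda\gamma$ are integers automatically, so the requirement $B\in M_2(\mathbb{Z})$ reduces to the single divisibility $\mu\mid \beta-\lambda(\alpha-\delta)-\lambda^2\gamma$.

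Finally I would show this divisibility is equivalent to the displayed parametrization, giving both directions of the iff at once. For the forward direction, set $k=\gamma$, $p=\alpha+\lambda\gamma$, $s=\delta-\lambda\gamma$, and use the divisibility to define the integer $r=\tfrac{1}{\mu}(\beta-\lambda(\alpha-\delta)-\lambda^2\gamma)$; substituting back recovers $A$ in exactly the stated form (in particular $A_{11}=p-\lambda k=\alpha$, $A_{22}=s+\lambda k=\delta$, and the $(1,2)$-entry $\mu r-\lambda s+\lambda(p-\lambda k)$ collapses to $\beta$). Conversely, inserting the displayed $A$ into $\beta-\lambda(\alpha-\delta)-\lambda^2\gamma$ cancels down to $\mu r$, so the divisibility holds and $L$ is a submodule. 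I expect the only real obstacle to be the bookkeeping in this last cancellation and in the $(1,2)$-entry of $B$, where the $\lambda p$, $\lambda s$, and $\lambda^2 k$ contributions must cancel precisely; it is worth recording the consistency checks that $B=\left(\begin{smallmatrix}p&r\\ \mu k&s\end{smallmatrix}\right)$ is the monodromy $A|L$ of $\tilde M=M_B$ and that $\det B=ps-\mu kr=\det A=1$, which confirms $B\in SL(2,\mathbb{Z})$ and agrees with Corollary~\ref{coro41}.
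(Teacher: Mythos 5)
Your proposal is correct and follows essentially the same route as the paper: identify the subgroup as $L=\langle a^n,a^{-\rho}b^m\rangle$ via Corollary~\ref{coro48}, invoke Theorem~\ref{lemma42} to reduce the extension question to $A L\subseteq L$, and then verify the parametrization by linear algebra. Your formulation of that last step as integrality of $P^{-1}AP$ with $P=\left(\begin{smallmatrix}n&-\rho\\0&m\end{smallmatrix}\right)$ is just a repackaging of the paper's linear system $Aa_1=a_1^pa_2^q$, $Aa_2=a_1^ra_2^s$, with the minor bonus that you make explicit why $A L\subseteq L$ (rather than $AL=L$) suffices.
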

\begin{proof}
Recall that if
$A=\left(
\begin{smallmatrix}
\alpha & \beta\\
\gamma & \delta\\
\end{smallmatrix}
\right)$, then $A$ acts in $\pi_1(T)$ as $Ax=x^\alpha y^\gamma$
and~$Ay=x^\beta y^\delta$. 

Write $a_1=x^n$, and $a_2=x^{-\rho}y^m$; then, by Corollary~\ref{coro48},
$L=\langle a_1,a_2\rangle\leq\pi_1(T)$ is the subgroup corresponding to
$\eta$.
By Theorem~\ref{lemma42}, $\eta$ extends to a covering of torus bundles $\tilde
M\rightarrow M_A$ if and only if $AL\subset L$.

If $A$ has the form of the statement of the lemma, then $Aa_1=a_1^pa_2^{kn/m}$,
 $Aa_2=a_1^ra_2^s\in L$, and we conclude that $\eta$ extends to a covering of torus
bundles $\tilde 
M\rightarrow M_A$.

If $AL\subset L$, then $Aa_1=a_1^pa_2^q$ and $Aa_2=a_1^ra_2^s$
for some integers $p,q,r,s$. 
If we write
$A=\left(
\begin{smallmatrix}
\alpha & \beta\\
\gamma & \delta\\
\end{smallmatrix}
\right)$, then
last equations are equivalent to
$$n\alpha=np-\rho q, \quad  -\rho\alpha+m\beta=nr-\rho s $$
$$n\gamma=qm, \quad -\rho\gamma+m\delta=ms.$$
We see that $q$ is of the form $q=\frac{n}{m}k$, and the lemma follows, that
is, 
$$\textstyle\alpha=p-i_0 \frac{n}{dm}k, \quad  \textstyle\beta=\frac{n}{m}r-i_0\frac{n}{dm} s+i_0\frac{n}{dm}(p-i_0\frac{n}{dm}k) $$
$$\gamma=k, \quad \delta=s+i_0\frac{n}{dm} k.$$
%


\end{proof}

\begin{remark} 
If $A$ is as in the statement of Lemma~\ref{lemma46}, then
$$
\left(
\begin{array}{cc}
 1 & i_0\frac{n}{dm} \\
 0 & 1
\end{array}
\right)
A
\left(
\begin{array}{cc}
 1 & i_0\frac{n}{dm} \\
 0 & 1
\end{array}
\right)^{-1}=
\left(
\begin{array}{cc}
 p & r\frac{n}{m} \\
 k & s
\end{array}
\right).
$$
That is, it is rather common for a torus bundle $M_A$ to admit a
covering of fibers.

\end{remark}
	
\begin{theorem}
\label{thm46}
Let $\varphi:\tilde M\rightarrow M$ an $mn$-fold covering of torus
bundles which is a covering of fibers with $m$ a divisor of $n$. 

The 
genus $g(\tilde M)<g(M)$ if and only if $m<n$, and~$M\cong M_A$,
and~$\tilde M\cong M_B$, where  
$A=\left(
	\begin{array}{cc}
	-1 & -\frac{n}{m} \\
	 a & a\frac{n}{m}-1
	\end{array}
	\right)
	$
and 
$
	B=\left(
	\begin{array}{cc}
	-1 & -1 \\
	a\frac{n}{m} & a\frac{n}{m}-1
	\end{array}
	\right)
	$ for some integer $a\neq\pm1$.

Moreover if $A$ has the form above, then the covering space of the torus
associated to the representation $\omega(n,m,d,\rho)$ with $\rho=i_0n/d=i_0m\ell$,
extends to an ${mn}$-fold covering of fibers
of $M_B$ onto $M_A$.
\end{theorem}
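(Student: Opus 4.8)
The plan is to prove both implications and the construction at once, using the normal form $B=\widetilde A$ of Corollary~\ref{coro41} together with Proposition~\ref{prop45}, which gives me the freedom to conjugate $B$ into a convenient shape without destroying the fiber-covering relation to $A$. Throughout write $\ell=n/m$, and recall that the Heegaard genus of a torus bundle is always $2$ or $3$, so $g(\tilde M)<g(M)$ can only mean $g(M)=3$ and $g(\tilde M)=2$. I will also use the two facts from \cite{saku1} recorded in Section~\ref{sec22}: $g(M_C)=2$ iff $C$ is $GL(2,\mathbb{Z})$-conjugate to $\left(\begin{smallmatrix}-1&-1\\ c&c-1\end{smallmatrix}\right)$ (equivalently $M_C\cong M_{1,c}$), and $M_{\ell,a}$ with $\ell\geq2$ has genus $3$ exactly when $a\neq\pm1$ (when $a=\pm1$ one has $M_{\ell,\pm1}\cong M_{1,\pm\ell}$, of genus $2$). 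First I would record the basic computation that drives everything: by Corollary~\ref{coro48} the covering $\varphi$ corresponds to a submodule $\langle x^n,x^{-\rho}y^m\rangle\leq H_A$ with $m\mid\rho$, and substituting $(p,q,s,r)=(n,0,-\rho,m)$ into Corollary~\ref{coro41} shows that $B=\widetilde A$ is obtained from the conjugate $A''=\left(\begin{smallmatrix}1&\rho/m\\0&1\end{smallmatrix}\right)A\left(\begin{smallmatrix}1&\rho/m\\0&1\end{smallmatrix}\right)^{-1}=\left(\begin{smallmatrix}\alpha''&\beta''\\ \gamma''&\delta''\end{smallmatrix}\right)$ simply by scaling the off-diagonal entries: $B=\left(\begin{smallmatrix}\alpha''&(m/n)\beta''\\ (n/m)\gamma''&\delta''\end{smallmatrix}\right)$. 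In particular $\det B=\det A=1$, and if $\ell=1$ then $A''=B$, so $M\cong\tilde M$ and nothing is lowered; hence a genuine lowering forces $m<n$.

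For the easy implication I would assume $M\cong M_A$ and $\tilde M\cong M_B$ with the matrices of the statement, $a\neq\pm1$, $m<n$. Then $M\cong M_{\ell,a}$ with $\ell\geq2$ and $a\neq\pm1$, so $g(M)=3$, while $B=\left(\begin{smallmatrix}-1&-1\\ a\ell&a\ell-1\end{smallmatrix}\right)$ gives $\tilde M\cong M_{1,a\ell}$, so $g(\tilde M)=2$; hence $g(\tilde M)<g(M)$.

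The substance is the converse, and this is where I expect the main difficulty. Assume $g(M)=3$ and $g(\tilde M)=2$. By the genus-$2$ criterion $B$ is conjugate to $D=\left(\begin{smallmatrix}-1&-1\\ c&c-1\end{smallmatrix}\right)$, but an arbitrary conjugation carrying $B$ to $D$ need not respect the relation between $B$ and $A$; this is precisely what Proposition~\ref{prop45} repairs. Applying it to $D$ I obtain a covering of fibers $\psi:M_D\to M_C$ with $D=\widetilde C$ and $M_C\cong M_A$; since $\psi$ arises from $\varphi$ by fiber-preserving homeomorphisms, it is again $mn$-fold with the same image type, so its defining submodule of $H_C$ again has the form $\langle x^n,x^{-\rho}y^m\rangle$ with the same $\ell=n/m$. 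The scaling identity above now applies to the pair $(C,D)$: with $C''=\left(\begin{smallmatrix}\alpha''&\beta''\\ \gamma''&\delta''\end{smallmatrix}\right)$ the relevant conjugate of $C$, I have $\left(\begin{smallmatrix}\alpha''&(m/n)\beta''\\ (n/m)\gamma''&\delta''\end{smallmatrix}\right)=\left(\begin{smallmatrix}-1&-1\\ c&c-1\end{smallmatrix}\right)$. Reading off entries forces $\alpha''=-1$, $\beta''=-\ell$, $\delta''=c-1$ and $(n/m)\gamma''=c$, so $\ell\mid c$; putting $a=c/\ell$ gives $C''=\left(\begin{smallmatrix}-1&-\ell\\ a&a\ell-1\end{smallmatrix}\right)$. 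Therefore $M\cong M_C\cong M_{\ell,a}$ and $\tilde M\cong M_D\cong M_{1,a\ell}$, the asserted matrices with $\ell=n/m$. Finally $g(M)=3$ forces $\ell\geq2$ (otherwise $M_{1,a}$ has genus $2$) and $a\neq\pm1$, which finishes the characterization.

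For the moreover clause I would run the first computation backwards. Given $A=\left(\begin{smallmatrix}-1&-\ell\\ a&a\ell-1\end{smallmatrix}\right)$, I check it has the parametrized shape of Lemma~\ref{lemma46}: with $p=-1$, $r=-1$, $k=a$, $s=a\ell-1$ and the indicated $d,i_0$, $\rho=i_0m\ell$, the displayed matrix is exactly $A$, so the torus covering attached to $\omega(\,\cdot\,)$ does extend to an $mn$-fold covering of fibers $\tilde M\to M_A$, and Corollary~\ref{coro41} then returns $\widetilde A=\left(\begin{smallmatrix}-1&-1\\ a\ell&a\ell-1\end{smallmatrix}\right)=B$. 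The only thing to watch here is the bookkeeping of the indices $m,n$ (and the argument order in $\omega$) against the submodule $\langle x^n,x^{-\rho}y^m\rangle$ of Corollary~\ref{coro48}. Overall, the genus accounting is immediate once the normal form is in hand; the real work is the converse, and within it the single nontrivial move is invoking Proposition~\ref{prop45} so that the conjugation $B\sim D$ can be upgraded to an honest covering of fibers $D=\widetilde C$.
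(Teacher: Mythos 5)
Your proposal is correct and follows essentially the same route as the paper: Proposition~\ref{prop45} to normalize the genus-two matrix while keeping the relation $D=\widetilde C$, Corollary~\ref{coro48} for the submodule $\langle x^n,x^{-\rho}y^m\rangle$, the Corollary~\ref{coro41} computation of $\widetilde{C}$, the divisibility of the lower-left entry to get $a=c/\ell$, and Lemma~\ref{lemma46} with the same parameters $p=r=-1$, $k=a$, $s=a\ell-1$ for the moreover clause. Your ``conjugate by $\left(\begin{smallmatrix}1&\rho/m\\0&1\end{smallmatrix}\right)$, then scale the off-diagonal entries'' identity is just a cleaner packaging of the entry-by-entry computation the paper performs (and then undoes by that same conjugation), not a different argument.
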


\begin{proof}
If matrices $A$ and $B$ have the form of the statement, then
$2=g(M_B)<g(M_A)=3$. See Section~\ref{sec22}. 

Assume that $g(\tilde M)<g(M)$. Then $g(\tilde M)=2$, and $g(M)=3$. By
Proposition~\ref{prop45}, we may assume that $\tilde M=M_B$ with 
$B=
\left(
\begin{smallmatrix}
 -1 & -1 \\
 b & b-1
\end{smallmatrix}
\right)
$
for some in\-te\-ger~$b$,
and $M=M_A$ where
$A$ is some matrix
$A=
\left(
\begin{smallmatrix}
 \alpha & \beta \\
 \gamma & \delta
\end{smallmatrix}
\right)
$, and~$B=\widetilde A$.

The submodule of the infinite cyclic covering of $M_A$ corresponding to the
covering $\varphi$ is $H_B=\langle x^n,x^{-\rho}y^m  \rangle$ with $m$ a
divisor of $n$, and
$\rho=i_0m\ell$ ($=i_0n/d$). See 
Corollary~\ref{coro48} (and Lemma~\ref{lemma48}). 

As in Remark~\ref{rem43}, with $p=n$, $q=0$, $r=m$, and $s=-i_0m\ell$, 
$$
\widetilde A=
\left(
\arraycolsep=1.8pt\def\arraystretch{2.2}
\begin{array}{cc}
 \alpha +\gamma{i_0}\ell & \null\quad
\displaystyle\left(-\gamma  {i_0}^2\ell^2-\alpha 
   {i_0}\ell+\delta  {i_0}\ell+\beta \right) \frac{m}{n} \\
 \displaystyle\frac{\gamma  n}{m} & \delta -\gamma  {i_0}\ell\\
\end{array}
\right)
$$
Since we are assuming
$\widetilde A=
\left(
\begin{smallmatrix}
 -1 & -1 \\
 b & b-1
\end{smallmatrix}
\right),
$
we see that
$$
\arraycolsep=1.8pt\def\arraystretch{2.2}
\begin{array}{ll}
\displaystyle
\alpha = -\frac{b {i_0}\ell m+n}{n},&
\displaystyle
\beta= -\frac{b {i_0}^2\ell^2 m^2+b {i_0}\ell mn+n^2}{m n}\\
\displaystyle
\gamma = \frac{bm}{n},&
\displaystyle
\delta= \frac{b {i_0}\ell m+b n-n}{n}\\
\\
\end{array}
$$
Since $\gamma$ is an integer, it follows that $n$ divides $bm$, say, $b=an/m$. Then
$$
\begin{array}{ll}
\alpha = -a{i_0}\ell -1,&
\beta = -a{i_0}^2\ell^2 -a{i_0}\ell \frac{n}{m} -\frac{n}{m}\\
\gamma = a,&
\delta = a{i_0}\ell +a \frac{n}{m}-1\\
\\
\end{array}
$$
and
$$
A=
\left(
\begin{array}{cc}
-a {i_0}\ell-1 & -a {i_0}^2\ell^2-a \frac{n}{m} {i_0}\ell-\frac{n}{m}\\
a & a {i_0}\ell+a \frac{n}{m}-1\\
\end{array}
\right)
$$

Notice that 
$$
\left(
\begin{array}{cc}
 1 & i_0\ell \\
 0 & 1
\end{array}
\right)
A
\left(
\begin{array}{cc}
 1 & i_0\ell \\
 0 & 1
\end{array}
\right)^{-1}=
\left(
\begin{array}{cc}
 -1 & -\frac{n}{m} \\
 a & a \frac{n}{m}-1
\end{array}
\right)
$$
Since in the conjugacy class of $A$ in $GL(2,\mathbb{Z})$, except for
interchange of $a$ and $n/m$, there is no other matrix in the 
form of Section~\ref{sec22},
we see that $a\neq\pm1$ and $m<n$, for we are assuming $g(M_A)=3$. 

By Lemma~\ref{lemma46}, the covering of the torus associated to the
representation $\omega(m,n,d,i_0n/d)$ with $n/d=m\ell$, extends to a
covering of fibers $\tilde M\rightarrow M_A$ if and only if there are integers
$p,k,r,s$ such that
$$
A=
\left(
\begin{array}{cc}
p-i_0\ell k & \frac{n}{m} r-i_0\ell s+i_0\ell(p-i_0\ell k) \\
k & s+i_0\ell k\\
\end{array}
\right).
$$
Defining $k=a$, $p=-1$, $s=a\frac{n}{m}-1$, and $r=-1$, we obtain the
required equality. 
And the theorem
follows.

\end{proof}

\begin{remark}
A  representation
$\omega:\pi_1(T)\rightarrow S_{n^2}$ with image
$\mathbb{Z}_n\oplus\mathbb{Z}_n$ is
conjugate to $\omega(n,n,1,0)$.  The subgroup of the corresponding
covering space is $L=\langle x^n,y^n\rangle$. If 
$A\in SL(2,\mathbb{Z})$, then $AL\subset L$, and $\widetilde
A=A$. Then the extension to a $n^2$-fold covering of fibers is of the form
$M_A\rightarrow M_A$, and there is no genus lowering.
\end{remark}

\begin{remark}
Theorem~\ref{thm46} implies that,
if
$A=
\left(
\begin{smallmatrix}
 -1 & -a \\
 b & ab-1
\end{smallmatrix}
\right)
$
with $a>0$ and $|a|,|b|\neq1$, then for each positive integer $m$, the
torus bundle $M_A$ admits an $(am)$-fold covering space that lowers the
genus.
\end{remark}

\section{Seifert manifolds}
\label{seifert}

Let $M$ be the orientable Seifert manifold with orientable orbit
surface of genus~$g$ and Seifert symbol $(Oo,
g;\beta_1/\alpha_1,\dots,\beta_t/\alpha_t)$, where
$\alpha_i,\beta_i$ are integers with
$\alpha_i\geq 1$ and $(\alpha_i,\beta_i)=1$ for $i=1,\dots,t$. 

Then
the fundamental group 
$\pi_1(M)=\langle
a_1,b_1,\dots,a_g,b_g,q_1,\dots,q_t,h:q_1^{\alpha_1}h^{\beta_1}=1,\dots,
q_t^{\alpha_t}h^{\beta_t}=1,q_1\cdots q_t=[a_1,b_1]\cdots[a_g,b_g],
[h,q_i]=[h,a_j]=[h,b_j]=1\rangle$ where $a_1,b_1,\dots,a_g,b_g$
represent a basis for the fundamental group of the orbit surface
of~$M$. By Lemma~1 of~\cite{trefoil} one obtains 

\begin{lemma}
\label{lema1}
Let $M=(Oo,
g;\beta_1/\alpha_1,\dots,\beta_t/\alpha_t)$ be a Seifert manifold.
Let $r_1,\dots,r_t$ be integers such that $\alpha_ir_i+\beta_i\equiv 0
\mod{n}$ for $i=1,\dots,t$, and assume that $r_1+\cdots+r_t=0$. Then there is an
$n$-fold cyclic covering space 
$$(Oo,g;B_1/\alpha_1,\dots,B_t/\alpha_t)\rightarrow M$$
where the integer
$B_i=({\alpha_ir_i+\beta_i})/{n}$
for $i=1,\dots,t$.
\end{lemma}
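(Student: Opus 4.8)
The plan is to realize the asserted cover as the connected cyclic covering associated to a suitable surjection of $\pi_1(M)$ onto $\mathbb{Z}_n$, and then to identify its total space as a Seifert manifold by a fiberwise analysis.

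First I would define a homomorphism $\phi\colon\pi_1(M)\to\mathbb{Z}_n$ on the generators of the presentation above by $\phi(a_j)=\phi(b_j)=0$, $\phi(q_i)=r_i$, and $\phi(h)=1$. To see that $\phi$ is well defined I would check that each defining relator is sent to $0$: the relator $q_i^{\alpha_i}h^{\beta_i}$ yields $\alpha_i r_i+\beta_i\equiv0\pmod n$, which is the first hypothesis; the relator coming from $q_1\cdots q_t=[a_1,b_1]\cdots[a_g,b_g]$ yields $r_1+\cdots+r_t$ on the left and $0$ on the (commutator) right, so the second hypothesis $r_1+\cdots+r_t=0$ is exactly what is needed; and every remaining relator is a commutator involving $h$, hence dies in the abelian target. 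Since $\phi(h)=1$ generates $\mathbb{Z}_n$, the map $\phi$ is onto, so its kernel determines a connected $n$-fold covering $p\colon\tilde M\to M$ whose associated representation has cyclic image $\mathbb{Z}_n$; this is the cyclic covering of the statement.

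Next I would show that $p$ is fiber preserving and ``vertical''. Because $\phi(h)$ has order $n$, the restriction of $\phi$ to a regular fiber $\langle h\rangle\cong\mathbb{Z}$ has kernel $\langle h^n\rangle$, so each regular fiber of $M$ is covered $n$-to-one by a single circle and the induced map of orbit surfaces has one sheet, hence is a homeomorphism. Thus $\tilde M$ is Seifert fibered over the same orbit surface, an orientable surface of genus $g$ (and $\tilde M$ is orientable, being a cover of the orientable $M$), with exactly one exceptional fiber lying over each exceptional fiber of $M$.

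The remaining, and main, step is the local computation at each exceptional fiber, which is precisely the content of Lemma~1 of \cite{trefoil}. Over the $i$-th exceptional fiber I would restrict the covering to the fibered solid torus neighborhood $V_i$, whose boundary torus carries the section curve $q_i$ and the fiber $h$ with meridian $q_i^{\alpha_i}h^{\beta_i}$; since $\phi$ kills this meridian (again by $\alpha_i r_i+\beta_i\equiv0$), $\phi$ descends to $\pi_1(V_i)\cong\mathbb{Z}$, and the induced covering of $V_i$ is again a fibered solid torus. Reading off its normalized Seifert invariant gives unchanged multiplicity $\alpha_i$ and new numerator $B_i=(\alpha_i r_i+\beta_i)/n$, which is an integer by the first hypothesis and satisfies $\gcd(\alpha_i,B_i)=1$, since any common divisor of $\alpha_i$ and $B_i$ would, via $nB_i=\alpha_i r_i+\beta_i$, divide $\beta_i$, contradicting $\gcd(\alpha_i,\beta_i)=1$. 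Assembling the regular part together with these $t$ neighborhoods yields $\tilde M\cong(Oo,g;B_1/\alpha_1,\dots,B_t/\alpha_t)$. The hard part is exactly this last bookkeeping of fibered-solid-torus framings, signs, and normalization conventions for Seifert invariants, which is why I would invoke Lemma~1 of \cite{trefoil} rather than redo it by hand.
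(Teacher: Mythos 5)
Your proposal is correct, but note that the paper does not actually prove this lemma: it is stated as an immediate consequence of Lemma~1 of~\cite{trefoil} (the statement is introduced by ``By Lemma~1 of~\cite{trefoil} one obtains''), so the paper's entire proof is that citation. What you have written is, in effect, a reconstruction of the argument behind the citation, and your checks are sound: the two hypotheses are exactly the well-definedness conditions for $\phi$ (the congruence $\alpha_i r_i+\beta_i\equiv 0\pmod{n}$ kills the relators $q_i^{\alpha_i}h^{\beta_i}$, and $r_1+\cdots+r_t=0$ kills the surface relator, all remaining relators being commutators); $\phi(h)=1$ makes $\phi$ onto, so the cover is connected and its associated representation has image $\mathbb{Z}_n$, which is the paper's definition of cyclic; and since $\phi(h)$ already generates $\mathbb{Z}_n$, the preimage of each fibered solid torus $V_i$ is connected, so the induced map of orbit surfaces is a homeomorphism and your local analysis applies. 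Two points you could make explicit to strengthen the sketch: first, $\gcd(\alpha_i,n)=1$ follows automatically from $\alpha_i r_i+\beta_i\equiv 0\pmod{n}$ and $(\alpha_i,\beta_i)=1$, which is why the multiplicity upstairs is again $\alpha_i$; second, the hypothesis $r_1+\cdots+r_t=0$ is used a second time in the bookkeeping you delegate to \cite{trefoil} --- the natural lifted section curves $\tilde q_i$ project to $q_ih^{-r_i}$, and their product matches the lifted commutator relator exactly when $\sum r_i=0$, which is what guarantees the symbol of the cover is $(Oo,g;B_1/\alpha_1,\dots,B_t/\alpha_t)$ with no extra correction term. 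Since you, like the paper, ultimately invoke Lemma~1 of~\cite{trefoil} for that final normalization, both proofs rest on the same foundation; yours buys a self-contained account of where each hypothesis enters, while the paper's buys brevity.
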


\begin{lemma}
\label{lema23}
Let $M$ be the Seifert manifold with symbol $(Oo,g;\beta/\alpha)$ and
$g\geq0$. Then 
the Heegaard genus of $M$ is
$$
h(M)=\left\{ 
  \begin{array}{ll}
    2g & \textrm{if $\beta=\pm1$}\\
    2g+1 & \textrm{otherwise.}\\
  \end{array} 
\right.
$$

\end{lemma}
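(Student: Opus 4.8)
The plan is to bound the Heegaard genus $h(M)$ from below by the rank of $H_1(M)$ and from above by exhibiting explicit splittings, and then to check that the two bounds coincide with the claimed values. First I would simplify the fundamental group. Since there is a single exceptional fiber ($t=1$), the relation $q_1=[a_1,b_1]\cdots[a_g,b_g]$ lets me eliminate $q_1$, giving
\[
\pi_1(M)=\langle a_1,b_1,\dots,a_g,b_g,h \mid ([a_1,b_1]\cdots[a_g,b_g])^{\alpha}h^{\beta},\ [h,a_j]=[h,b_j]=1\rangle .
\]
Abelianizing, every commutator dies, so $H_1(M)\cong \mathbb{Z}^{2g}\oplus \mathbb{Z}/|\beta|$ (with the convention $\mathbb{Z}/0=\mathbb{Z}$). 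Hence the minimal number of generators of $H_1(M)$ is $2g$ when $\beta=\pm1$ and $2g+1$ otherwise (including $\beta=0$, where coprimality forces $\alpha=1$ and the extra summand is a free $\mathbb{Z}$). Since $\operatorname{rank}(H_1(M))\le \operatorname{rank}(\pi_1(M))\le h(M)$, this yields the lower bounds $h(M)\ge 2g$ when $\beta=\pm1$ and $h(M)\ge 2g+1$ otherwise.

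For the upper bounds I would use the decomposition $M=(\Sigma_{g,1}\times S^1)\cup_{T^2}V$, where $\Sigma_{g,1}$ is the once-punctured base and $V$ is the fibered solid torus over a disk containing the exceptional point. Writing $S^1=I_+\cup I_-$ as a union of two arcs, the pieces $H_\pm=\Sigma_{g,1}\times I_\pm$ are handlebodies of genus $2g$ (since $\Sigma_{g,1}\times I$ is a handlebody of genus $1-\chi(\Sigma_{g,1})=2g$), glued along two copies of $\Sigma_{g,1}$; the boundary torus is cut into the two annuli $A_\pm=\partial\Sigma_{g,1}\times I_\pm$, each with core the horizontal section curve $c=\partial\Sigma_{g,1}\times\{pt\}$. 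On $T^2=\partial V$ the meridian $\mu_V$ has class $\alpha\,[\text{section}]+\beta\,[\text{fiber}]$, so $c\cdot\mu_V=\beta$. When $\beta=\pm1$, $c$ is a longitude of $V$; then $V$ is attached to $H_+$ along an annulus whose core runs once around $V$, so $V$ collapses into a collar and $H_+\cup V$ is again a genus-$2g$ handlebody with $\partial(H_+\cup V)=\partial H_-$. This produces a genus-$2g$ splitting $M=(H_+\cup V)\cup H_-$, giving $h(M)\le 2g$ when $\beta=\pm1$, for every $\alpha$.

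Finally I would establish $h(M)\le 2g+1$ in general. Here the previous trick fails: when $|\beta|\ge2$ the core $c$ represents $\pm\beta$ in $\pi_1(V)=\mathbb{Z}$, so $\pi_1(H_+\cup V)=\langle a_i,b_i,v \mid \prod[a_i,b_i]=v^{\beta}\rangle$ (van Kampen along $A_+$) is not free and $H_+\cup V$ is not a handlebody; one cannot absorb $V$ into a single side, and a genuinely different, higher-genus splitting is required. I would obtain one from the balanced presentation above, which has $2g+1$ generators and $2g+1$ relators (the $2g$ commutators $[h,a_j],[h,b_j]$ together with $(\prod[a_i,b_i])^{\alpha}h^{\beta}$); realizing it geometrically as the vertical Heegaard splitting whose spine is a section over a wedge of $2g$ loops together with one regular fiber yields a genus-$2g+1$ splitting. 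Combining with the lower bounds then gives $h(M)=2g$ exactly when $\beta=\pm1$ and $h(M)=2g+1$ otherwise. The main obstacle is this last step: verifying that the standard balanced presentation is geometric, i.e.\ that the complement of the chosen genus-$2g+1$ handlebody is again a handlebody (equivalently, that the vertical splitting has handlebody sides). The genus-$2g$ construction and the homological lower bounds are routine; the real content lies in producing an honest genus-$2g+1$ splitting when $|\beta|\ge2$, precisely because $V$ can no longer be attached to one handlebody.
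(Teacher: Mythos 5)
Your lower bound is exactly the paper's argument: abelianize the presentation, get $H_1(M)\cong\mathbb{Z}^{2g}\oplus\mathbb{Z}_{|\beta|}$, and use $\operatorname{rank}(H_1(M))\le h(M)$. For the upper bound, however, the paper does no construction at all: it simply invokes Boileau--Zieschang \cite{walo} for the existence of splittings of genus $2g$ (when $\beta=\pm1$) and $2g+1$ (otherwise). You instead try to build the splittings by hand. Your genus-$2g$ construction for $\beta=\pm1$ is correct and complete: since the core $c$ of the attaching annulus meets the meridian of $V$ once, $c$ is a longitude, $V\cong A_+\times I$ is a collar, and $H_+\cup V\cong H_+$; this is a legitimate replacement for the citation in that case.

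The genuine gap is the case $|\beta|\ge2$, and you flag it yourself: you never verify that the complement of the proposed genus-$(2g+1)$ vertical handlebody is a handlebody, and that verification is the entire content of the upper bound there (a balanced presentation being ``geometric'' is not automatic, so the presentation count proves nothing by itself). The missing step can be closed as follows. With $M=(\Sigma_{g,1}\times S^1)\cup_{T^2}V$, let $H_1$ be a neighborhood of (section over the wedge $W$) $\cup$ (one fiber through the wedge point); its complement decomposes as $(P\times J)\cup_{A}V'$, where $P$ is the surface of genus $g$ with two boundary circles obtained from $N(W)$ by removing a disk about the wedge point, $J$ is the complementary arc of $S^1$, $V'\cong V$ after absorbing a $T^2\times I$ collar, and $A$ is the annulus (curve parallel to $\partial N(W)$)$\,\times J$. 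The core of $A$ meets the boundary of the meridian disk $\delta\times J$ of $P\times J$ exactly once, where $\delta$ is an arc in $P$ joining its two boundary circles; so $A$ is a \emph{primitive} annulus in the handlebody $P\times J$, and gluing a solid torus to a handlebody along a primitive annulus yields a handlebody of the same genus, $2g+1$. The point you were missing is that primitivity is needed only on the handlebody side: it is irrelevant that the core of $A$ wraps $|\beta|$ times around $V'$, which is precisely why this construction, unlike your absorption trick, survives $|\beta|\ge2$. Alternatively, close the gap the way the paper does, by citing \cite{walo}.
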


\begin{proof}
One can construct a Heegaard
decomposition for $M$ of genus~$2g$ if $\beta=\pm1$, and a Heegaard
decomposition for $M$ of genus $2g+1$ if~$\beta\neq\pm1$ (see~\cite{walo}). Therefore
$
h(M) \leq \left\{ 
  \begin{array}{ll}
    2g & \textrm{if $\beta=\pm1$}\\
    2g+1 & \textrm{otherwise.}\\
  \end{array} 
\right.
$

Recall that $rank(H_1(M))\leq h(M)$. 

Since 
$H_1(M)=\langle a_1,b_1,\dots,a_g,b_g,q,h:q^\alpha
h^\beta=1, q=1\rangle_{Ab}$,
%
then
$H_1(M)\cong
\left\{ 
  \begin{array}{ll}
    \mathbb{Z}^{2g} & \textrm{if $\beta=\pm1$}\\
    \mathbb{Z}^{2g}\oplus \mathbb{Z}_{|\beta|} & \textrm{otherwise}\\
  \end{array} 
\right.
$
 where the subindex `$Ab$' indicates the image of the Abelianization homomorphism.
In particular 
$h(M)\geq \left\{ 
  \begin{array}{ll}
    2g & \textrm{if $\beta=\pm1$}\\
    2g+1 & \textrm{otherwise.}\\
  \end{array} \right.$

\end{proof}

\begin{corollary}
\label{coro22}
For any integers $g\geq0$, $\alpha\geq 1$, and $|\beta|\geq2$ with
$\alpha$ and $\beta$ coprime, there is a $|\beta|$-fold
covering space 
$$(Oo,g;\pm1/\alpha)\rightarrow (Oo,g;\beta/\alpha).$$
And the genus $g(Oo,g;\beta/\alpha)=g(Oo,g;\pm1/\alpha)+1$.
\end{corollary}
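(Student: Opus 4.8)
The plan is to read the covering straight off Lemma~\ref{lema1} and the two genera straight off Lemma~\ref{lema23}; the corollary is a direct specialization of these, so there is no substantive obstacle.

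First I would construct the covering. The base $(Oo,g;\beta/\alpha)$ has a single exceptional fiber, so $t=1$ in the notation of Lemma~\ref{lema1}. Because the balancing condition $r_1+\cdots+r_t=0$ then forces $r_1=0$, the congruence $\alpha r_1+\beta\equiv0\pmod n$ reduces to $\beta\equiv0\pmod n$, i.e.\ $n\mid\beta$. To make the resulting invariant $B_1=(\alpha r_1+\beta)/n=\beta/n$ equal to $\pm1$ I would take $n=|\beta|$. Lemma~\ref{lema1} then yields an $|\beta|$-fold cyclic covering
$$(Oo,g;B_1/\alpha)\longrightarrow(Oo,g;\beta/\alpha),\qquad B_1=\frac{\beta}{|\beta|}=\pm1,$$
the sign being that of $\beta$; since $B_1=\pm1$ is coprime to $\alpha$, the target symbol is legitimate. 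This is exactly the covering claimed.

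For the genus identity I would apply Lemma~\ref{lema23} to each end of the covering. Since $|\beta|\geq2$ gives $\beta\neq\pm1$, the lemma yields $g(Oo,g;\beta/\alpha)=2g+1$; and since the covering space has Seifert invariant $B_1=\pm1$, it yields $g(Oo,g;\pm1/\alpha)=2g$. Subtracting,
$$g(Oo,g;\beta/\alpha)=2g+1=(2g)+1=g(Oo,g;\pm1/\alpha)+1,$$
which is the asserted relation.

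The entire argument is bookkeeping in the hypotheses of Lemma~\ref{lema1}: the single exceptional fiber forces $r_1=0$, which in turn forces $n\mid\beta$, and choosing $n=|\beta|$ is what simultaneously produces the required degree and the invariant $\pm1$. I expect no genuine difficulty beyond confirming these divisibility conditions and noting that $\pm1$ is coprime to $\alpha$ so that the covering symbol is admissible.
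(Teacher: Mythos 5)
Your proof is correct and follows essentially the same route as the paper: the paper likewise sets $r_1=0$ and invokes Lemma~\ref{lema1} with $n=|\beta|$ to get $B_1=\beta/|\beta|=\pm1$ and the $|\beta|$-fold covering, with the genus identity then read off from Lemma~\ref{lema23} exactly as you do. Your additional remarks (that $t=1$ forces $r_1=0$ via the balancing condition, and that $\pm1$ is coprime to $\alpha$) are correct bookkeeping that the paper leaves implicit.
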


\begin{proof}
If we set $r_1=0$, then, using Lemma~\ref{lema1}, we obtain
$B_1=\beta/|\beta|=\pm1$, and a $|\beta|$-fold covering space
$(Oo,g;\pm1/\alpha)\rightarrow (Oo,g;\beta/\alpha)$. 
\end{proof}

\begin{remark}
In the case $M$ is an orientable Seifert manifold with non-orientable
orbit surface, the following also holds.

\begin{theorem}[\cite{jair}]
\label{prop55}
Let $\alpha, \beta$ be a pair of coprime integers with
$\alpha\geq1$ and~$|\beta|\geq2$; let $g<0$, and
let $M$ be the Seifert manifold with symbol $(Oo,g;\beta/\alpha)$. 

If $g<-1$, then $\pi_1(M)$ is of infinite order and $M$ has a finite
covering space $\tilde M=(Oo,g;\pm1/\alpha)\rightarrow M$ such that
the Heegaard genus 
$h(\tilde M)=h(M)+1$. Also $rank(\pi_1(\tilde M))=rank(\pi_1(M))+1$.
\end{theorem}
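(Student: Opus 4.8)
The plan is to follow the template of Corollary~\ref{coro22} together with Lemma~\ref{lema23}, adapting every step to the non-orientable base $N_{|g|}$ of genus $|g|$, whose cross-cap generators reverse the orientation of the fibre. Writing $k=|g|$, I would start from the presentation
$$\pi_1(M)=\langle v_1,\dots,v_k,q,h \mid q^\alpha h^\beta,\ v_j h v_j^{-1}h,\ [q,h],\ v_1^2\cdots v_k^2\,q\,h^{-b}\rangle,$$
where the relator $v_j h v_j^{-1}h$ encodes $v_jhv_j^{-1}=h^{-1}$, forced by orientability of the total space over a non-orientable base. To show $\pi_1(M)$ is infinite when $g<-1$, I would set $h=1$ and recognise the quotient as the orbifold group of $N_k$ with one cone point of order $\alpha$; its orbifold Euler characteristic $1-k+1/\alpha$ is non-positive for $k\geq2$, so the base orbifold is Euclidean or hyperbolic with infinite fundamental group, whence $\pi_1(M)$ is infinite.

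For the covering, I would realise $\tilde M=(Oo,g;\pm1/\alpha)$ via the non-orientable analogue of Lemma~\ref{lema1}, choosing the covering datum that unwraps the exceptional fibre so that $\beta/\alpha$ becomes $\pm1/\alpha$ while the base $N_k$ is preserved, exactly as in Corollary~\ref{coro22}. The new feature is that the defining homomorphism must be compatible with the twisting relation $v_jhv_j^{-1}=h^{-1}$, which already constrains how the fibre class may be unwrapped; this is the first place where the non-orientable case genuinely departs from the orientable one, and I would verify the resulting cover is connected with the claimed Seifert symbol.

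The two Heegaard genera I would bound from both sides. For the upper bounds I would exhibit explicit Heegaard decompositions in the spirit of~\cite{walo}, built from a handle structure of $N_k$. For the lower bounds I would first use $\operatorname{rank}(H_1)\leq h$: abelianising collapses the $v_j$ to a single class, leaving $H_1\cong\mathbb{Z}^{k-1}\oplus B$ with $B$ finite of order $4\alpha$, where the crucial point is that $B$ is cyclic or splits as $\mathbb{Z}_2\oplus\mathbb{Z}_{2\alpha}$ according to the parity of $\beta$. In the parity case where $B$ splits, $H_1$ already separates the ranks of $M$ and $\tilde M$ by one, giving the companion $\operatorname{rank}(\pi_1)$ comparison directly.

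The remaining, and hardest, case is when $H_1$ has the same rank for $M$ and $\tilde M$, so abelianisation cannot detect the extra generator. Here I would argue exactly as in Claim~\ref{lemma24}: build a ring homomorphism $\rho:\mathbb{Z}\pi_1(M)\to\mathbb{Z}_N$ sending every relator to $1$ and annihilating all of its Fox derivatives, and invoke~\cite{Lu} to pin down the rank. I expect the main obstacle to be choosing $\rho$ compatibly with the orientation-reversing relators $v_jhv_j^{-1}=h^{-1}$, since the sign twist forces $2\rho(h)=0$ in the image; the delicate work is selecting the modulus $N$ and the values $\rho(v_j),\rho(q),\rho(h)$ that nonetheless kill every Fox derivative. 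Comparing the resulting rank with the explicit Heegaard decompositions then establishes the stated equalities $h(\tilde M)=h(M)+1$ and $\operatorname{rank}(\pi_1(\tilde M))=\operatorname{rank}(\pi_1(M))+1$.
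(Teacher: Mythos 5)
The paper never proves Theorem~\ref{prop55}: the statement is imported from the thesis~\cite{jair}, and no argument for it appears anywhere in the text, so your attempt can only be judged on its own terms. On those terms it has two genuine gaps.

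The first gap is the construction of the covering itself. You propose to obtain $\tilde M=(Oo,g;\pm1/\alpha)\to M$ from a ``non-orientable analogue of Lemma~\ref{lema1}'', i.e.\ from a cyclic covering, ``exactly as in Corollary~\ref{coro22}'', keeping the base $N_{|g|}$ and unwrapping the fibre. This cannot work once $|\beta|\geq3$. As you note, the relation $v_jhv_j^{-1}=h^{-1}$ forces $2\phi(h)=0$ for every homomorphism $\phi$ of $\pi_1(M)$ into an abelian group; hence in any covering with cyclic (indeed abelian) deck action the generic fibre unwraps at most twice. On the other hand, a covering that preserves the base $N_{|g|}$ and its cone point of order $\alpha$ and converts $\beta/\alpha$ into $\pm1/\alpha$ is purely vertical and must unwrap the fibre by the full degree $n\geq3$; moreover the relation $q=v_1^{-2}\cdots v_k^{-2}$ forces $\phi(q)\in 2\,\mathrm{Im}(\phi)$, so even degree~$2$ only handles $|\beta|=2$. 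Consequently, for $|\beta|\geq3$ the covering of the theorem is not cyclic and not even regular: one needs, for instance, the representation $\pi_1(M)\to D_{|\beta|}$ onto the dihedral group sending $h$ to a rotation of order $|\beta|$, each $v_j$ to a reflection, and $q$ to $1$, and then the covering corresponding to the non-normal preimage of the reflection subgroup. You flag the constraint $2\rho(h)=0$ only as a nuisance in choosing data; in fact it rules out your construction, so the existence claim at the heart of the theorem is left unproved. (The introduction of the paper calls these Seifert coverings cyclic, which may have misled you, but the dihedral-type, non-regular nature of the cover for $|\beta|\geq3$ is forced by the computation above.)

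The second gap is the direction of the genus/rank comparison. Your homology computation is correct: $H_1\cong\mathbb{Z}^{|g|-1}\oplus B$ with $|B|=4\alpha$, and $B$ is cyclic exactly when $\beta$ is odd. But read the theorem again: it asserts $h(\tilde M)=h(M)+1$ and $\operatorname{rank}(\pi_1(\tilde M))=\operatorname{rank}(\pi_1(M))+1$, i.e.\ the \emph{cover} has the larger genus and rank (this is precisely why it furnishes examples for Shalen's remark, a finite-index subgroup needing more generators than the ambient group). When $\beta$ is even, your computation gives $\operatorname{rank}H_1(M)=|g|+1$ while $\operatorname{rank}H_1(\tilde M)=|g|$, since $\pm1$ is odd: homology separates the two manifolds in the direction \emph{opposite} to the one claimed, so it can never ``give the companion rank comparison directly'' as you assert. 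In every case the essential difficulty is to bound $\operatorname{rank}\pi_1(\tilde M)$ and $h(\tilde M)$ from below strictly above $\operatorname{rank}H_1(\tilde M)$; thus the Fox-ideal argument of~\cite{Lu}, or the geometric lower bounds of~\cite{walo}, must be aimed at the cover $\tilde M=(Oo,g;\pm1/\alpha)$, whereas your sketch applies it to $M$. Your template, modelled on the genus-lowering Corollary~\ref{coro22}, has the inequality backwards, and until both points are repaired neither stated equality is established.
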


Also it follows from~\cite{jair}, that the manifolds of Corollary~\ref{coro22}
and Theorem~\ref{prop55} are the only examples of
 (branched or unbranched) coverings of (orientable or not) Seifert manifolds
 that lower the 
Heegaard genus, in case 
the orbit surface is not the 2-sphere.

\end{remark}

\end{document}